%% file: main.tex
\documentclass[a4paper,UKenglish,thm-restate]{article}

\usepackage{    
	amsmath, amssymb, mathrsfs, enumerate, graphicx,
	xcolor, caption, multicol, multirow, float,
    mathtools, bbm, extarrows, tikz-cd,
    appendix
}
\usepackage{amsthm}
\usepackage{algpseudocode}
\usepackage{thmtools, thm-restate}
\usepackage{graphicx}
\usepackage{subcaption}
\usepackage{authblk}

\usepackage[margin=1in]{geometry}
\usepackage{hyperref}
\usepackage{framed}

\newtheorem{theorem}{Theorem}[section]
\newtheorem*{theorem*}{Theorem}
\newtheorem{lemma}[theorem]{Lemma}

\newtheorem{proposition}[theorem]{Proposition}
\newtheorem*{proposition*}{Proposition}
\newtheorem{corollary}[theorem]{Corollary}

\theoremstyle{definition}
\newtheorem{definition}[theorem]{Definition}
\newtheorem{remark}[theorem]{Remark}

\addtotheorempostheadhook[remark]{\phantomsection}
\addtotheorempostheadhook[definition]{\phantomsection}
\addtotheorempostheadhook[lemma]{\phantomsection}
\addtotheorempostheadhook[corollary]{\phantomsection}
\addtotheorempostheadhook[theorem]{\phantomsection}
\addtotheorempostheadhook[proposition]{\phantomsection}

\usepackage{cleveref}

\title{Discrimination of dynamic data via curvature sets}

\author[1]{Nadezhda Belova}
\author[1]{Maxwell Goldberg}
\author[1]{Facundo M\'emoli}
\author[1]{Sriram Raghunath}
\author[1]{Andrew Xie}

\affil[1]{Department of Mathematics, Rutgers University}

\input{preamble.tex}

\begin{document}
\maketitle

\begin{abstract}
    Techniques from topological data analysis (TDA) have proven effective in studying time-dependent data arising in dynamic systems, such as animal swarming behavior and spatiotemporal patterns in neuroscience. While early algorithms leveraged efficient updates to persistence diagrams for dynamic data, they struggled to distinguish behaviors that are isometric at each fixed time but differ qualitatively. This limitation was addressed by Kim and Mémoli, who introduced a spatiotemporal persistence framework for dynamic metric spaces, resulting in multiparameter persistence modules. However, these modules pose computational challenges.

    To address this, we build on insights from Gómez and Mémoli, who observed that the homology of Rips complexes over size $(2k+2)$ point subsets of a metric space—termed principal curvature sets—is both tractable and informative. We extend this idea to dynamic settings by introducing dynamic curvature-set persistent homology, applying the spatiotemporal framework of Kim and Mémoli to curvature sets. We prove that the resulting multiparameter persistence modules are interval-decomposable: in fact, they possess a stronger property we term antichain-decomposable. Utilizing this property, we present a new algorithm to efficiently compute the erosion distance $d_E$ (due to Patel) between arbitrary antichain-decomposable modules (including, but not limited to modules produced by our construction). Additionally, our construction is stable with respect to a generalized Gromov-Hausdorff distance between time-dependent datasets proposed by Kim and Mémoli. This enables a robust computational pipeline for distinguishing dynamic data, as demonstrated in experiments with the Boids model, where we successfully detect parameter changes.
\end{abstract}

\section{Introduction}
    Topological data analysis has proven a relevant and effective tool for analyzing complex and dynamic systems, particularly those in biology and neuroscience \cite{Songdechakraiwut2020, Ciocanel2021, Catanzaro2024}. A common thread throughout these approaches is the use of topological invariants (e.g. barcodes) to represent the system at individual slices in time, before analyzing the time series of invariants to detect similarities in behavior or the appearance of certain patterns/motifs.
    
    As elucidated in \cite{spatiotemporal}, information is lost by computing topological invariants pointwise in time, as there exist examples of dynamic data that are isometric at each fixed time step, but nevertheless posses qualitatively different behavior. An example of this is illustrated in \Cref{fig:dmsisometryexample}. To produce a more informative invariant, in \cite{spatiotemporal} Kim and M\'emoli construct the spatiotemporal persistent homology, which produces multiparameter modules indexed by both a time interval and a scale parameter. This results in increased discrimination power at the cost of producing complex multiparameter modules.
    \begin{figure}[h]
        \centering
        \includegraphics[width=0.667\textwidth]{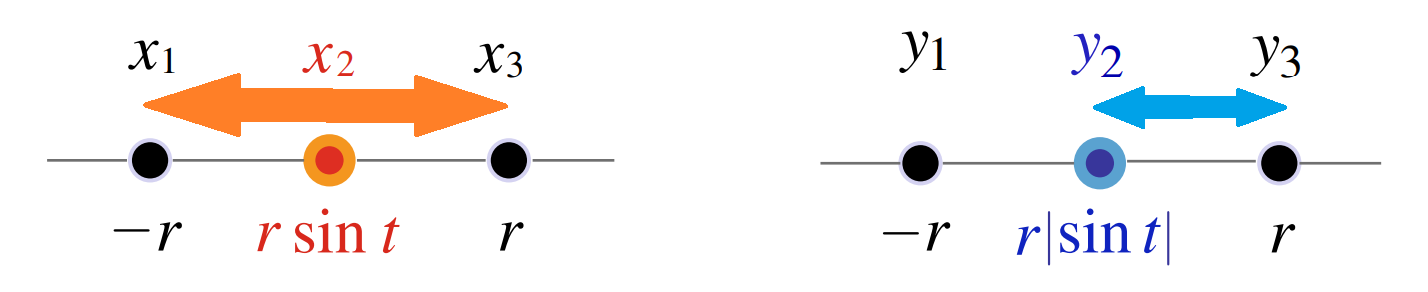}
        \caption{Two dynamic metric spaces exhibiting qualitatively different behavior. Here $r>0$, $x_1,x_3,y_1$ and $y_3$ are fixed at the displayed coordinate values whereas $x_2(t) = r\sin t$ and $y_2(t) = r |\sin t|$.  For any fixed $t$, the resulting configurations are isometric as metric spaces (when $x_2$ is to the left of the midpoint of $x_1$ and $x_3$, reflect the left space through the midpoint to show isometry). However, the two dynamic behaviors are certainly different. Figure courtesy of Kim and M\'emoli; see \cite{spatiotemporal}.}
        \label{fig:dmsisometryexample}
    \end{figure}

    In order to alleviate the computational cost of this multiparameter construction, we consider the idea of curvature sets, first introduced by Gromov in \cite{Gromov1999MetricSF}. This was utilized in G\'omez and M\'emoli in \cite{curvaturesets}, where it is shown that for a point cloud with sufficiently few points, the homology of the Rips filtration can be computed quickly in closed-form while still remaining informative. By considering all ``small enough'' subsets of arbitrary-size input data, a tractable invariant can be derived. 
    \subsection{Contributions}
    We propose a construction for topological analysis of dynamic data, utilizing the idea of curvature sets to provide a computational simplification of the spatiotemporal construction from \cite{spatiotemporal}. The resulting invariant is stable and computationally tractable.

    Furthermore, we show that this construction produces multidimensional persistence modules that \emph{antichain-decomposable}: that is, interval decomposable where any two distinct summands in the interval decomposition are, informally, ``incomparable''. These modules have additional properties such as thinness, i.e. pointwise dimension either $0$ or $1$, see \Cref{def:thinmodule}.

    We present a new algorithm to efficiently compute the erosion distance \cite{patelerosion} between any two antichain-decomposable persistence modules. We later generalize this algorithm to compute the erosion distance between any two modules with a generalized convexity property, see \ref{def:rankmaximal}. It is worth emphasizing that the erosion distance algorithm we present does not rely on the details of our particular construction.
    
    Finally, we demonstrate the tractability and effectiveness of our construction analyzing dynamic data produced by the boids model, introduced in \cite{boidsmodel}.
\section{Preliminaries}    
    \subsection{Dynamic Data Modeling}

    We follow \cite{stablesignatures} and \cite{spatiotemporal} in using the following mathematical representation of dynamic data sets.

    \begin{definition}[Dynamic metric spaces (DMS) \cite{stablesignatures}]
    
            A \emph{dynamic metric space} $(X, d_X(\cdot))$ is a finite set $X$ together with a function $d_X:\mathbb{R} \times X \times X \to \mathbb{R}_{\geq 0}$ such that 
            
            \begin{itemize}
                \item[(1)] For each $t \in \mathbb{R}$, $(X, d_X(t))$ is a pseudo-metric space
                \item[(2)] For each pair $x, x' \in X$, the map $t \mapsto d(t)(x, x')$ is continuous
            \end{itemize}
        
    \end{definition}

\begin{definition}[Semimetric space]
            For a set $X$, a function $d_X: X \times X \rightarrow \mathbb{R}_{\geq 0}$ is a \emph{semimetric} if it satisfies:
            \begin{itemize}
                \item $d_X(x, x) = 0$ for all $x \in X$
                \item $d_X(x, x') = d_X(x', x)$ for all $x, x' \in X$
            \end{itemize}
            The pair $(X,d_X)$ will be referred to as a \emph{semimetric space}.
        \end{definition}
        
        \begin{definition}[\cite{spatiotemporal}]
            Given any DMS $\gamma_X = (X, d_X(\cdot))$ and an interval $I \subset \mathbb{R}$, the semimetric $d_X(I)(\cdot, \cdot)$ on $X$ is given by $d_X(I)(x, x') = \inf_{t \in I} d_X(t)(x, x')$ for any $x, x' \in X$. Let $\gamma_X(I) := (X, d_X(I))$ denote the associated semimetric space.
        \end{definition}

        In order to appropriately encode the information of dynamic data sets, our data representations must combine information over different times. We recall the following sequence of definitions from \cite{spatiotemporal} to define the spatiotemporal Rips filtration of a DMS, which is an invariant recording data features persisting through time and scale.

        Although the Rips complex and filtration are ordinarily defined on metric spaces, they can nonetheless be constructed for a semimetric space.
        \begin{definition}[Rips complex]
            Let $(X,\,d_X)$ be a semimetric space. For each $\delta\in\mathbb{R}_+$ we define the \emph{Rips complex} of $X$ at scale $\delta$, $\mathcal{R}_\delta(d_X)$, as an abstract simplicial complex on $X$ where for finite $\sigma\subseteq X$ we have $\sigma\in\mathcal{R}_\delta(d_X)$ if and only if $d_X(x,\,y)\leq\delta$ for all $x,\,y\in\sigma$.
        \end{definition}

        By $\bsimp$ we denote the category of abstract simplicial complexes with simplicial maps. Also note that we consider $\mathbb{R}$ as a poset category. By $\mathbf{Vec}$ we denote the category vector spaces and linear maps (over a fixed field $\mathbb{F}$). 

        \begin{definition}[Rips filtration]
            Let $(X,\,d_X)$ be a semimetric space. The \emph{Rips filtration} of $(X,\,d_X)$ is the functor $\mathcal{R}_\bullet(d_X):\mathbb{R}_+ \to \bsimp$ defined by: To each $\delta \in \mathbb{R}_+$ assign $\mathcal{R}_\delta(d_X)$. Also to each morphism $\delta\leq\delta'\in\mathbb{R}$ assign the inclusion map $\mathcal{R}_\delta(d_X)\hookrightarrow\mathcal{R}_{\delta'}(d_X)$.
        \end{definition}

        Here by $\mathbf{Int}$ we denote the poset of compact intervals in $\mathbb{R}$ with inclusions. And $\mathbf{Int}\times\mathbb{R}$ is the product poset, also considered as a poset category. We denote this poset $\mathbf{Dyn}$.

        We are now prepared to define the spatiotemporal Rips filtration, which will be used later to construct our proposed invariant.

        \begin{definition}[Spatiotemporal Rips filtration \cite{spatiotemporal}]
            Given any DMS $\gamma_X=(X,\,d_X(\cdot))$, its \emph{spatiotemporal Rips filtration} is a functor $\mathcal{R}^{\mathrm{lev}}(\gamma_X):\mathbf{Dyn}\to\bsimp$ defined by: To each $(I,\,\delta)\in\mathbf{Dyn}$ assign $\mathcal{R}_\delta(\gamma_X(I))$. Also to each morphism $(I,\delta)\leq(I',\,\delta')\in\mathbf{Dyn}$ assign the inclusion map $\mathcal{R}_\delta(\gamma_X(I))\hookrightarrow\mathcal{R}_{\delta'}(\gamma_X(I'))$. 
        \end{definition}

        Again, following \cite{spatiotemporal} we define the $k$th dynamic Rips persistent homology module of a DMS $(X,\,\gamma_X)$ as \[H_k(\mathcal{R}^\mathrm{lev}(X,\,\gamma_X)): \bdyn \to \bvec\] where $H_k$ is the $k$th simplicial homology functor (over the fixed field $\mathbb{F}$). This is an example of a persistence module, a functor from a poset category into a category of modules. 

        \begin{remark}[Interleaving distance between $\bdyn$-modules] \label{rem:dyninterleavingdistance}
            The space of all $\bdyn$-indexed persistence modules is metrized by the interleaving distance $d_I^{\bvec}$. For a detailed exposition of this interleaving distance, see \cite[Section 4.1]{spatiotemporal}.
        \end{remark}

        Since the poset $\mathbf{Dyn}$ is multidimensional, standard results \cite{multiparamhard} imply that computational questions regarding interleaving distances between these persistence modules are in general intractable.

\subsection{Properties of Persistence Modules}

    A particularly important type of persistence module are the interval-decomposable persistence modules. We state the definitions over an arbitrary poset:

    \begin{definition}[Order-connectivity]
        Let $(\mathcal{P}, \leq)$ be a poset and $S \subset \mathcal{P}$. We say two elements $p, q \in S$ are \emph{order-connected} in $S$ if there exists a sequence $p = x_0, \ldots, x_n = q$ of elements in $S$, such that all adjacent elements are comparable: that is, for all $i \in \{0, \ldots, n-1\}$, either $x_i \geq x_{i+1}$ or $x_{i+1} \geq x_i$.
        
        Order-connectivity in $S$ is an equivalence relation on elements in $S$. Thus, we can partition $S$ into equivalence classes, i.e. $S = \bigsqcup_{\alpha \in A} C_{\alpha}$. We say each equivalence class $C_{\alpha}$ is an order-connected component of $S$.
    \end{definition}
    
    \begin{definition}[Interval] \label{def:interval}
        For a poset $(\mathcal{P}, \leq)$, an \emph{interval} of $\mathcal{P}$ is a subset $I \subset \mathcal{P}$ such that:
        \begin{enumerate}
            \item[(i)]{(Connectivity)} For any $p, q \in I$, the elements $p$ and $q$ are order-connected in $I$.
            \item[(ii)]{(Convexity)} For any $p, q \in I$ and $r \in \mathcal{P}$, if $p \leq r \leq q$ then $r \in I$.
        \end{enumerate}
        We will denote the collection of all intervals over $\mathcal{P}$ by $\mathbf{Int}(\mathcal{P})$.
    \end{definition}

    The following is a special case of intervals, that will be relevant in future sections:
    \begin{definition}[Segment] \label{def:segment}
        For a poset $(\mathcal{P}, \leq)$ and some $p, q \in \mathcal{P}$ satisfying $p \leq q$, the \emph{segment} $[p, q]$ is defined to be the set \[\{ x \in \mathcal{P} : p \leq x \leq q \}.\]
        We will denote the collection of all segments over $\mathcal{P}$ by $\mathbf{Seg}(\mathcal{P})$. This can be viewed as a poset category ordered by inclusion, which is isomorphic to the poset $\mathcal{P}^\text{op}\times\mathcal{P}$. This is because $[p_1, q_1] \subset [p_2, q_2]$ if and only if $p_1 \geq p_2$ and $q_1 \leq q_2$.
    \end{definition}

    Certain subsets of $\mathcal{P}$ can be used to construct persistence modules, in a natural way:
    \begin{definition}[Indicator Module]
        Let $\mathcal{P}$ be a poset, and $I$ be a convex (in the sense of property (ii) of \Cref{def:interval}) subset of $\mathcal{P}$. The \emph{indicator module} $\mathbb{I}_I: \mathcal{P} \rightarrow \bvec$ is the functor defined by
        \begin{center}
            $\mathbb{I}_I(p) \defeq \begin{cases} \mathbb{F} & p \in I\\ 0 & p \notin I \end{cases}
            \qquad
            \mathbb{I}_I(p \leq q) \defeq \begin{cases} \mathbf{Id}_\mathbb{F} & p, q \in I\\ 0 & \mathrm{otherwise} \end{cases}
            $
        \end{center}
        Note that functoriality follows from the fact that $I$ is convex.
    \end{definition}
    \begin{definition}[Interval Module] \label{def:intervalmodule}
        For a poset $(\mathcal{P}, \leq)$ we say a persistence module $\bbv: \mathcal{P} \rightarrow \bvec$ is an \emph{interval module} if $\bbv \cong \mathbb{I}_I$ for some interval $I$ of $\mathcal{P}$.
    \end{definition}
    
    \begin{definition}[Interval-Decomposable Module]
        Let $\mathcal{P}$ be a poset, and $\bbv: \mathcal{P} \rightarrow \bvec$ be a persistence module. We say $\bbv$ is \textit{interval-decomposable} if there exists a collection $\{I_\alpha\}_{\alpha \in A}$ of intervals over $\mathcal{P}$ such that \[\bbv \cong \bigoplus_{\alpha \in A} \mathbb{I}_{I_\alpha}.\]
    \end{definition}
    
    Another important class of modules is the following.
    \begin{definition} \label{def:thinmodule}
        Let $\mathcal{P}$ be a poset, and $\bbv: \mathcal{P} \rightarrow \bvec$ be a persistence module. We say $\bbv$ is a \emph{thin module} if $\dim \bbv(p) \in \{0, 1\}$ for all $p \in \mathcal{P}$
    \end{definition}

    Note that interval modules are thin, although the converse is in general not true. For a thin module, it is natural to make the following definition:
    \begin{definition}[Support of a thin module]
        The \emph{support} of a thin module $\bbv: \mathcal{P} \rightarrow \bvec$, denoted $\supp \bbv$, is the subset $\supp \bbv \defeq \{ p \in \mathcal{P} : \bbv(p) \cong \mathbb{F} \}$.
    \end{definition}

    We now define the main class of modules analyzed in this work:
    \begin{definition}[Antichain-decomposable modules] \label{def:acdmodules}
        Let $\mathcal{P}$ be a poset, and $\bbv: \mathcal{P} \rightarrow \bvec$ be a persistence module. We say $\bbv$ is \emph{antichain-decomposable} if there exists a family of intervals $\{S_\alpha\}_{\alpha \in A}$ such that $\bbv \cong \bigoplus_{\alpha \in A} \mathbb{I}_{S_\alpha}$ and the following holds:
        \[ p \in S_\alpha, q \in S_\beta \implies p \not \leq q \text{ and } q \not \leq p \qquad \forall \alpha \neq \beta. \]
        Informally, this means that all summands of the interval decomposition are ``incomparable''.
    \end{definition}
    For brevity, we will use the acronym ACD to refer to the antichain-decomposable property. 
    \begin{remark}[Properties of ACD modules] \label{rem:acdproperties}
        If $\bbv: \mathcal{P} \rightarrow \bvec$ is an ACD module, the following are true:
        \begin{itemize}
            \item $\bbv$ is a thin module. Suppose not: then there must be two summands $\mathbb{I}_S$ and $\mathbb{I}_T$ in the interval decomposition of $\bbv$ that satisfy $S \cap T \neq \emptyset$. But then there exists an $x$ such that $x \in S$ and $x \in T$, and of course $x \leq x$: but this contradicts the fact that $\bbv$ is ACD. So $\bbv$ must be thin.
            \item $\supp \bbv$ is a convex subset of $\mathcal{P}$. Suppose not: then there exists $p, r, q \in \mathcal{P}$ with $p \leq r \leq q$, such that $p, q \in \supp \bbv$ but $r \notin \supp \bbv$. Because intervals are convex, there must be two distinct summands $\mathbb{I}_S, \mathbb{I}_T$ in the interval decomposition of $\bbv$ such that $p \in S, q \in T$. But the fact that $p$ and $q$ are comparable contradicts the fact that $\bbv$ is an ACD module.
        \end{itemize}
        Note that the converse (thin module with convex support) does not necessarily imply ACD, such modules may in general not even be interval-decomposable.
    \end{remark}
    
    Finally, for computational purposes we introduce the notion of boundedness, when the indexing poset is $\mathbb{R}^d$ with the product order.
    \begin{definition}[Bounded ACD module] \label{def:boundedacdmodule}
        For an ACD module $\bbv: \mathbb{R}^d \rightarrow \bvec$, we say $\bbv$ is \emph{bounded} if $\supp(\bbv)$ is a bounded subset of $\mathbb{R}^d$.
    \end{definition}

\section{Curvature Set Simplification of Persistence Modules}
    To motivate our construction in the general case (for arbitrary-size dynamic data), we will first consider the Rips persistent homology of a static point cloud.
    
    Let $(X, d_X)$ be a finite metric space. If $X$ has exactly three points, i.e. $|X| = 3$, then $H_1(\mathcal{R}_\delta(d_X)) \cong \{ 0 \}$ for any $\delta \geq 0$: this is because $\mathcal{R}_\delta(d_X)$ is a flag complex, so the moment it contains a cycle with three edges, it also contains a filled in $2$-simplex, which makes $\mathcal{R}_\delta(d_X)$ contractible.
    
    However, something interesting happens with the degree-1 homology when $|X| = 4$. Consider the point clouds shown in \Cref{fig:fourpointmetricspaces}. 
    
    \input{four_point_spaces}
    
    In the case of \Cref{fig:fourpointsquare}, we can see that at $\delta = 1$ a cycle forms in $\mathcal{R}_1(d_X)$, which persists for all $\delta \in [1, \sqrt{2})$, until at $\delta = \sqrt{2}$ the entire square is filled in, which makes $\mathcal{R}_{\sqrt{2}}(d_X)$ contractible. As such, the Rips filtration of $X$ has nontrivial degree-1 homology for $\delta \in [1, \sqrt{2})$.

    On the other hand, in the case of \Cref{fig:fourpointrhombus}, in the Rips complex $\mathcal{R}_\delta(d_Y)$ the minor diagonal of the rhombus is filled in at $\delta = 1$, which is the same scale at which the cycle of outer edges forms: as such, $H_1(\mathcal{R}_\delta(d_Y))$ is trivial for all $\delta \geq 0$. 

    These spaces are part of a general pattern: in the case of $4$-point metric spaces and the degree-1 homology, there can be at most one point in the persistence diagram of the space. If it exists, the birth time is given by the maximum length of all edges in the quadrilateral, and the death time is given by the length of the minor diagonal.

    This pattern generalizes to higher dimensions: when considering the $k$th Rips persistent homology of a metric space with $(2k+2)$ points, there can be at most $1$ point in the persistence diagram, which can be calculated in closed form by similar formulae, as derived by G\'omez and M\'emoli in \cite{curvaturesets}.

    We now generalize this to the dynamic setting. First, we make the following definition:
    \begin{definition}[$n$-DMS]
        For any $n \in \mathbb{N}$, an $n$-DMS is a dynamic metric space $\gamma_X = (X, d_X(\cdot))$ with $|X| = n$: that is, exactly $n$ points.
    \end{definition}
    We will be interested in studying $H_k(\mathcal{R}^{\mathrm{lev}}(\gamma_X))$ of a $(2k+2)$-DMS $\gamma_X$, for any $k \in \mathbb{Z}_{\geq 0}$.
    \begin{remark} \label{rem:moduledimension}
        If $\gamma_X$ is a $(2k+2)$-DMS, the module $H_k(\mathcal{R}^{\mathrm{lev}}(\gamma_X))$ must be thin. This follows immediately from \cite[Theorem 4.4(B)]{curvaturesets} because for each $p \in \bdyn$, $\mathcal{R}^{\mathrm{lev}}(\gamma_X)(p)$ is a semimetric space with exactly $(2k+2)$ points. An abridged statement of \cite[Theorem 4.4(B)]{curvaturesets} can be found in \Cref{app:curvaturesetstheorem}.

        In particular, the fact that $H_k(\mathcal{R}^{\mathrm{lev}}(\gamma_X))$ is thin implies that its support is well-defined.
    \end{remark}
    To prove \Cref{thm:intervaldecomp}, one of our main structural results about the dynamic curvature set construction, we will rely on the following lemma:
    \begin{lemma}\label{lem:structuremaps}
        Suppose $k \in \mathbb{Z}_{\geq 0}$ and let $\gamma_X = (X, d_X(\cdot))$ be a $(2k+2)$-DMS. Put $\bbv = H_k(\mathcal{R}^{\mathrm{lev}}(\gamma_X))$. Then for any $p, q \in \supp \bbv$ satisfying $p \leq q$, we have $\mathcal{R}^{\mathrm{lev}}(\gamma_X)(p) = \mathcal{R}^{\mathrm{lev}}(\gamma_X)(q)$ as abstract simplicial complexes. Moreover, the structure map $\bbv(p \leq q)$ is an isomorphism.
    \end{lemma}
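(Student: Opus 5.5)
We will argue combinatorially, using the structure of flag complexes on $2k+2$ vertices that carry nontrivial $k$-th homology. The key input is the characterization from \cite[Theorem 4.4(B)]{curvaturesets} (see \Cref{app:curvaturesetstheorem}). For a semimetric on $2k+2$ points, the Rips complex has $H_k \neq 0$ if and only if it is the boundary of the $(k+1)$-dimensional cross-polytope. Equivalently, as a flag complex on $2k+2$ vertices, its $1$-skeleton is the complete graph minus a perfect matching, and the $k+1$ missing edges are the "diagonals". In that case $H_k \cong \mathbb{F}$, generated by the fundamental class of the sphere $S^k$.

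Our plan is therefore as follows. Let $p=(I,\delta) \le q=(I',\delta')$ both lie in $\supp\bbv$, and write $K_p = \mathcal{R}^{\mathrm{lev}}(\gamma_X)(p)$ and $K_q$ similarly. By functoriality of $\mathcal{R}^{\mathrm{lev}}(\gamma_X)$ we have $K_p \subseteq K_q$. Both complexes are flag complexes on the same vertex set $X$, so each is determined by its $1$-skeleton. By the characterization, each $1$-skeleton is $K_{2k+2}$ minus a perfect matching, so each has exactly $\binom{2k+2}{2}-(k+1)$ edges. An inclusion of graphs on the same vertex set with the same number of edges is an equality. Hence the $1$-skeleta coincide, and by the flag property $K_p = K_q$ as abstract simplicial complexes.

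Given this, the map $\mathcal{R}^{\mathrm{lev}}(\gamma_X)(p\le q)$ is the identity inclusion $K_p \hookrightarrow K_q = K_p$, and so $\bbv(p\le q) = H_k(\mathrm{id}) = \mathrm{id}$ is an isomorphism. Nothing more is needed, since the structure maps of $\bbv$ are just $H_k$ applied to these inclusions.

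The main obstacle is making sure the cited theorem gives exactly the statement we need. Concretely, we must check that nonvanishing $H_k$ on $2k+2$ points forces the $1$-skeleton to be complete minus a perfect matching. We must also check that this holds for semimetrics (as noted in \Cref{rem:moduledimension}), not only for metrics. If the appendix statement is phrased only through birth and death formulas, we would instead argue directly:
\begin{itemize}
\item A flag complex on $2k+2$ vertices with a nonzero $k$-cycle needs each vertex to be non-adjacent to at least one other vertex. Otherwise it is a cone, which is contractible.
\item Each vertex has at most one non-neighbor. Otherwise the complex has too few vertices to support a $k$-cycle: the link or independence-complex counting forces $H_k=0$.
\end{itemize}
Together these give a perfect matching of non-edges, and the edge-counting argument above then applies.
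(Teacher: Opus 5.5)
Your proposal is correct and follows essentially the same route as the paper. Both arguments invoke the G\'omez--M\'emoli characterization that a Rips complex on $2k+2$ points with nontrivial $H_k$ is the cross-polytope boundary $\mathfrak{B}_{k+1}$, combine this with the inclusion $K_p \subseteq K_q$ and a size comparison to get $K_p = K_q$, and conclude that the structure map is induced by the identity. The only differences are cosmetic: you count edges and use the flag property, whereas the paper uses that the two complexes are isomorphic; your fallback sketch also shows the characterization is purely combinatorial about flag complexes, which settles your semimetric concern.
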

    \begin{proof}
        By definition of $\bdyn$, there exist intervals $I_1, I_2$ and $\delta_1, \delta_2 \in \mathbb{R}_+$ such that $p = (I_1, \delta_1)$ and $q = (I_2, \delta_2)$. Because $p, q \in \supp \bbv$,
        \[H_k(\mathcal{R}_{\delta_1}(\gamma_X(I_1)) \cong H_k(\mathcal{R}_{\delta_2}(\gamma_X(I_2)) \cong \mathbb{F},\]
        which means that $\mathrm{dgm}_k^{\mathrm{VR}}(\gamma_X(I_1))$ and $\mathrm{dgm}_k^{\mathrm{VR}}(\gamma_X(I_2))$ are both non-empty. By \cite[Theorem 4.3]{curvaturesets} (re-stated in \Cref{app:curvaturesetstheorem}), this implies that
        \[\mathcal{R}^{\mathrm{lev}}(\gamma_X)(p) = \mathcal{R}_{\delta_1}(\gamma_X(I_1)) \cong \mathfrak{B}_{k+1} \quad \mathrm{ and } \quad \mathcal{R}^{\mathrm{lev}}(\gamma_X)(q) = \mathcal{R}_{\delta_2}(\gamma_X(I_2)) \cong \mathfrak{B}_{k+1} \]
        as abstract simplicial complexes, where $\mathfrak{B}_{k+1}$ is the $(k+1)$-dimensional cross polytope. And so \[\mathcal{R}^{\mathrm{lev}}(\gamma_X)(p) \cong \mathcal{R}^{\mathrm{lev}}(\gamma_X)(q).\] But by functoriality, because $p \leq q$ we know that $\mathcal{R}^{\mathrm{lev}}(\gamma_X)(p) \subseteq \mathcal{R}^{\mathrm{lev}}(\gamma_X)(q)$: by combining this with the fact that $\mathcal{R}^{\mathrm{lev}}(\gamma_X)(p) \cong \mathcal{R}^{\mathrm{lev}}(\gamma_X)(q)$, we get we get $\mathcal{R}^{\mathrm{lev}}(\gamma_X)(p) = \mathcal{R}^{\mathrm{lev}}(\gamma_X)(q)$. And the structure map $\bbv(p \leq q)$ is induced by the inclusion map $\iota: \mathcal{R}^{\mathrm{lev}}(\gamma_X)(p) \rightarrow \mathcal{R}^{\mathrm{lev}}(\gamma_X)(q)$, which is a simplicial isomorphism. So $\bbv(p \leq q)$ is an isomorphism.
    \end{proof}
    Using the above description, we can obtain the following:
    \begin{remark}[Convexity of supports]\label{lem:suppconvexity}
        Suppose $\gamma_X$ is a $(2k+2)$-DMS. Then $\supp H_k(\mathcal{R}^{\mathrm{lev}}(\gamma_X))$ is a convex subset of $\bdyn$.
    \end{remark}
    \begin{proof}
        Put $\bbv = H_k(\mathcal{R}^{\mathrm{lev}}(\gamma_X))$, and let $p, q, r \in \bdyn$ such that $p \leq q \leq r$ and satisfying $p, r \in \supp \bbv$. To show convexity, we must show that $q \in \supp \bbv$. From earlier assumptions, the structure map $\bbv(p \leq r)$ must be an isomorphism, i.e. has rank $1$. But by functoriality, $\bbv(p \leq r) = \bbv(q \leq r) \circ \bbv(p \leq q)$, so the maps $\bbv(q \leq r): \bbv(q) \rightarrow \bbv(r)$ and $\bbv(p \leq q): \bbv(p) \rightarrow \bbv(q)$ must both have rank $1$. Therefore, $\bbv(q)$ cannot be trivial, and thus $q \in \supp \bbv$.
    \end{proof}
    This results in a structure theorem for $H_k(\mathcal{R}^{\mathrm{lev}}(\gamma_X))$ of a $(2k+2)$-DMS $\gamma_X$. In particular, we show both interval decomposability and a constructive description of the interval decomposition.
    \begin{theorem}[Interval Decomposability] \label{thm:intervaldecomp}
        Suppose $k \in \mathbb{Z}_{\geq 0}$, and $\gamma_X = (X, d_X(\cdot))$ is a $(2k+2)$-DMS. Then $H_k(\mathcal{R}^{\mathrm{lev}}(\gamma_X))$ is interval-decomposable. Moreover, $H_k(\mathcal{R}^{\mathrm{lev}}(\gamma_X)) \cong \bigoplus_{\alpha \in A} \mathbb{I}_{S_\alpha}$, where $\{S_\alpha\}_{\alpha \in A}$ are the order-connected components of $\supp H_k(\mathcal{R}^{\mathrm{lev}}(\gamma_X))$.
    \end{theorem}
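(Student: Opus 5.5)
Put $\bbv = H_k(\mathcal{R}^{\mathrm{lev}}(\gamma_X))$ and $S = \supp \bbv$, and let $\{S_\alpha\}_{\alpha\in A}$ be the order-connected components of $S$. The plan is to show that each $S_\alpha$ is an interval of $\bdyn$, to build for each $\alpha$ a natural isomorphism from $\mathbb{I}_{S_\alpha}$ onto the restriction of $\bbv$ to $S_\alpha$, and then to check that these assemble into an isomorphism $\bigoplus_\alpha \mathbb{I}_{S_\alpha}\to\bbv$ of functors on all of $\bdyn$.

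\emph{Step 1: each $S_\alpha$ is an interval.} Connectivity holds by definition of an order-connected component. For convexity, take $p\le r\le q$ with $p,q\in S_\alpha$. By \Cref{lem:suppconvexity} we have $r\in S$. Moreover $p\le r$ with both $p,r$ in $S$, so $p$ and $r$ are order-connected in $S$. Hence $r\in S_\alpha$. The same argument shows that whenever two points of $S$ are comparable, they lie in the same component. In particular, distinct components are pairwise incomparable, which is exactly the ACD condition.

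\emph{Step 2: trivialize $\bbv$ on each component.} Fix a base point $p_\alpha\in S_\alpha$ and a nonzero vector $v_\alpha\in\bbv(p_\alpha)\cong\mathbb{F}$; this uses \Cref{rem:moduledimension}. For any $x\in S_\alpha$, choose a zigzag $p_\alpha=x_0,\dots,x_n=x$ in $S_\alpha$. By \Cref{lem:structuremaps} every structure map along the zigzag is an isomorphism. Transport $v_\alpha$ along it, using forward maps or their inverses as appropriate, to obtain $v_x\in\bbv(x)$.

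\textbf{Main obstacle: independence of the path.} I have to show that $v_x$ does not depend on the chosen zigzag. If the structure maps were arbitrary isomorphisms, transport around a loop could produce a nontrivial scalar (monodromy). Here it cannot. \Cref{lem:structuremaps} says that for comparable $p\le q$ in $S$, the complexes $\mathcal{R}^{\mathrm{lev}}(\gamma_X)(p)$ and $\mathcal{R}^{\mathrm{lev}}(\gamma_X)(q)$ are \emph{equal}, and the structure map is induced by the identity inclusion. So along any zigzag in $S_\alpha$ all the complexes coincide with one fixed simplicial complex $K_\alpha\cong\mathfrak{B}_{k+1}$. Every $\bbv(x)$ for $x\in S_\alpha$ is literally the same vector space $H_k(K_\alpha)$, and every structure map between points of $S_\alpha$ is the identity on it. Transport is therefore the identity along every path, and we can simply set $v_x = v_\alpha\in H_k(K_\alpha)$. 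This also shows $\bbv(x\le y)v_x = v_y$ for all $x\le y$ in $S_\alpha$.

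\emph{Step 3: assemble the isomorphism.} Define $\phi:\bigoplus_\alpha\mathbb{I}_{S_\alpha}\to\bbv$ at each point $x$ by sending $1\in\mathbb{I}_{S_\alpha}(x)$ to $v_x$ when $x\in S_\alpha$, and by the zero map when $x\notin S$. Since the $S_\alpha$ are disjoint, each component of $\phi$ is an isomorphism. It remains to check naturality for every $x\le y$ in $\bdyn$:
\begin{itemize}
\item If $x,y$ lie in the same $S_\alpha$, naturality is Step 2.
\item If $x,y$ lie in different components, this is impossible by Step 1.
\item If exactly one of $x,y$ lies in $S$, one of $\bbv(x),\bbv(y)$ is $0$. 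The same holds on the indicator side, so both composites are zero.
\end{itemize}
This gives the desired isomorphism $\bbv\cong\bigoplus_\alpha\mathbb{I}_{S_\alpha}$, and it is moreover antichain-decomposable.
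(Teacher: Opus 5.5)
Your proposal is correct and follows essentially the same route as the paper. Both arguments use \Cref{lem:structuremaps} to see that the Rips complexes are literally equal across each order-connected component, so that a single generator ($[\xi_\alpha]$ in the paper, your $v_\alpha$) trivializes $\bbv$ on $S_\alpha$ with identity structure maps, and then define pointwise isomorphisms and check naturality. The only difference is bookkeeping: the paper first shows $\bbv\cong\mathbb{I}_{\supp\bbv}$ and then splits it over components, leaving convexity of each $S_\alpha$ and incomparability implicit until \Cref{cor:curvaturesetsacd}; you verify these explicitly in Step 1 and map $\bigoplus_\alpha\mathbb{I}_{S_\alpha}$ into $\bbv$ directly, which makes your argument slightly more complete.
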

    \begin{proof}
        Let $\bbv = H_k(\mathcal{R}^{\mathrm{lev}}(\gamma_X))$, and let $\{S_\alpha\}_{\alpha \in A}$ be the decomposition of $\supp \bbv$ into order-connected components. We will first show that $\bbv \cong \mathbb{I}_{(\supp \bbv)}$.

        By \Cref{lem:structuremaps}, for any $p, q \in \supp \bbv$ such that $p \leq q$, it follows that $\mathcal{R}^{\mathrm{lev}}(\gamma_X)(p) = \mathcal{R}^{\mathrm{lev}}(\gamma_X)(q)$ as simplicial complexes. Thus, for any fixed $\alpha \in A$, $\mathcal{R}^{\mathrm{lev}}(\gamma_X)(p) = \mathcal{R}^{\mathrm{lev}}(\gamma_X)(q)$ for all $p, q \in S_\alpha$. As the underlying simplicial complexes are equal, we can identify their chain complexes. For each $\alpha$, choose an arbitrary $p_\alpha \in S_\alpha$. Because $\bbv(p_\alpha) = H_k(\mathcal{R}^{\mathrm{lev}}(\gamma_X)(p_\alpha)) \cong \mathbb{F}$, we can pick a representative cycle $[\xi_\alpha]$ that generates $\bbv(p_\alpha)$. And thus $[\xi_\alpha]$ generates $\bbv(q)$, for all $q \in S_\alpha$.

        For any $p \in \bdyn$, we can define the maps $f_p: \bbv(p) \rightarrow \mathbb{I}_{(\supp \bbv)}(p)$ and $g_p: \mathbb{I}_{(\supp \bbv)}(p) \rightarrow \bbv(p)$ by
        \[
            f_p(x) = 
            \begin{cases}
                t & \exists!\, \alpha \in A \text{ s.t. } p \in S_\alpha, x = t [\xi_\alpha] \\
                0 & \text{otherwise}
            \end{cases}
            \qquad
            g_p(t) = 
            \begin{cases}
                t[\xi_\alpha] & \exists! \, \alpha \in A \text{ s.t. } p \in S_\alpha \\
                0 & \text{otherwise}
            \end{cases}
        \]

        Which satisfy naturality and are inverses of each other. Thus, $\bbv \cong \mathbb{I}_{(\supp \bbv)}$. From this, it follows that $\bbv \cong \mathbb{I}_{(\supp \bbv)} \cong \bigoplus_{\alpha \in A} \mathbb{I}_{S_\alpha}$, which is the interval decomposition of $\bbv$.
    \end{proof}
    
    The following is immediate by \Cref{thm:intervaldecomp} and \Cref{lem:suppconvexity}:
    \begin{corollary} \label{cor:curvaturesetsacd}
        For any $k \in \mathbb{Z}_{\geq 0}$ and $\gamma_X$ a $(2k+2)$-DMS, the module $H_k(\mathcal{R}^{\mathrm{lev}}(\gamma_X))$ is ACD, in the sense of \Cref{def:acdmodules}.
    \end{corollary}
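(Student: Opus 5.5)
By \Cref{thm:intervaldecomp}, the module $\bbv = H_k(\mathcal{R}^{\mathrm{lev}}(\gamma_X))$ decomposes as $\bigoplus_{\alpha \in A} \mathbb{I}_{S_\alpha}$, where $\{S_\alpha\}_{\alpha\in A}$ are the order-connected components of $\supp \bbv$. The plan is to show that this particular decomposition already satisfies the incomparability condition of \Cref{def:acdmodules}, so no new decomposition is needed. The intervals are the $S_\alpha$ themselves. Each $S_\alpha$ is order-connected by construction. Each is also convex: if $p \le r \le q$ with $p,q \in S_\alpha$, then \Cref{lem:suppconvexity} puts $r \in \supp\bbv$, and the chain $p \le r$ places $r$ in the same component as $p$. (This convexity is implicitly used in \Cref{thm:intervaldecomp} anyway.)

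The main step is incomparability. Suppose $\alpha \neq \beta$, $p \in S_\alpha$, $q \in S_\beta$, and suppose for contradiction that $p \le q$ (the case $q \le p$ is symmetric). Both $p$ and $q$ lie in $\supp \bbv$. Then the two-element sequence $p = x_0,\, x_1 = q$ consists of elements of $\supp\bbv$ whose adjacent terms are comparable, so $p$ and $q$ are order-connected in $\supp\bbv$. This puts them in the same order-connected component, i.e.\ $S_\alpha = S_\beta$, contradicting $\alpha\ne\beta$ (components are disjoint equivalence classes). Hence $p\not\le q$ and $q \not\le p$, which is exactly the ACD condition.

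I expect no real obstacle here. The only point needing care is that order-connectivity is measured inside $\supp\bbv$, not inside each $S_\alpha$. With that in mind, comparability of two support points immediately merges their components, and the substantive input (that the module is the indicator of its support, and that the support is convex) has already been supplied by \Cref{thm:intervaldecomp} and \Cref{lem:suppconvexity}.
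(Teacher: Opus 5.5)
Your proposal is correct and follows the paper's own argument: take the decomposition from \Cref{thm:intervaldecomp}, and observe that a comparable pair $p \in S_\alpha$, $q \in S_\beta$ with $\alpha \neq \beta$ would be order-connected in $\supp \bbv$, contradicting that the $S_\alpha$ are distinct order-connected components. Your explicit check that each $S_\alpha$ is convex, via \Cref{lem:suppconvexity}, is correct and fills in a detail the paper leaves implicit.
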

    \begin{proof}
        It is known that $H_k(\mathcal{R}^{\mathrm{lev}}(\gamma_X)) \cong \bigoplus_{\alpha \in A} \mathbb{I}_{S_\alpha}$ for a family of intervals $\{S_\alpha\}_{\alpha \in A}$. All that is left is to verify incomparability. Suppose this doesn't hold, and so for some $\alpha \neq \beta$ we have $p \in S_\alpha, q \in S_\beta$ such that $p \leq q$. But this cannot be, because the order-connected components $\{S_\alpha\}_{\alpha \in A}$ are maximal, and the existence of such $p, q$ imply that $S_\alpha \cup S_\beta$ is order-connected, a contradiction.
    \end{proof}
    In general, interval-decomposable modules are atypical in the space of multiparameter persistence modules: in particular, Bauer and Scoccola have shown in \cite{multiparameterindecomposable} that indecomposable persistence modules are dense in the space of multiparameter persistence modules.

    The fact that our construction produces ACD modules provides substantial computational benefits, which will be leveraged in the coming sections.

\section{Generalization to Arbitrary Size Data}
    To leverage the structure theorems \Cref{thm:intervaldecomp} and \Cref{cor:curvaturesetsacd} for analyzing dynamic data, we must contend with the fact that real-world data sets can be of arbitrary cardinality, certainly much greater than $(2k+2)$ for any reasonable $k$.

    To generalize our approach, we will consider all $(2k+2)$ size subsets of the data set. In practice, we will take a sample of these subsets, and a $k$-center scheme is described in \Cref{sect:experimental} to choose a manageable number of subsets while maintaining as much representativeness as possible.
    \begin{definition}[Curvature set of a DMS]
        Let $\gamma_X = (X, d_X)$ be a dynamic metric space. The \emph{$n$th curvature set}\footnote{The name ``curvature set" was coined by Gromov in \cite{Gromov1999MetricSF} due to the fact these subsets can be used to recover curvature information from manifolds.} $K_n(\gamma_X)$ is the set of dynamic metric spaces
        \[K_n(\gamma_X) \defeq \{ (S, d_X |_S) : S \subseteq X, |S| \leq n \}.\]
    \end{definition}

    We can then apply the Rips spatiotemporal persistent homology to each element of the curvature set. 
    \begin{definition}[Persistence set of a DMS]\label{def:dyncurvpmod}
        Let $\gamma_X = (X, d_X)$ be a dynamic metric space, and $k \in \mathbb{Z}_{\geq 0}$. The \emph{$(n, k)$th persistence set}, denoted $D_k^n(\gamma_X)$ is
        \[D_k^n(\gamma_X) \defeq \{ H_k(\mathcal{R}^{\mathrm{lev}}(\gamma_S)) : \gamma_S \in K_{n}(\gamma_X) \}.\]
    \end{definition}

    For a dynamic metric space $\gamma_X$, $D_k^n(\gamma_X)$ is the invariant we will work with. Most often, we will consider $D_k^{(2k+2)}(\gamma_X)$, because the structure of persistence modules in $D_k^{(2k+2)}(\gamma_X)$ are particularly simple by earlier results. To reduce notational burden, define $D_k(\gamma_X) := D_k^{(2k+2)}(\gamma_X)$.

    To metrize the space of all persistence sets, we have the following metric:
    \begin{definition}[Hausdorff distance on persistence sets] \label{def:persistencesetmetric}
        Let $n, k \in \mathbb{Z}_{\geq 0}$, and let $\gamma_X$ and $\gamma_Y$ be two DMSs. For any metric $\rho$ between $\bdyn$ indexed modules, we have the associated \emph{Hausdorff distance} between persistence sets induced by $\rho$:
        \[d^{\rho}_H(D_k^n(\gamma_X), D_k^n(\gamma_Y)) = \max \left\{ \sup_{\bbv \in D_k^n(\gamma_X)} \inf_{\bbw \in D_k^n(\gamma_Y)} \rho(\bbv, \bbw), \sup_{\bbw \in D_k^n(\gamma_Y)} \inf_{\bbv \in D_k^n(\gamma(X))} \rho(\bbv, \bbw) \right\}.\]
        For notational simplicity, we will use $d_H(D_k^n(\gamma_X), D_k^n(\gamma_Y))$ to denote $d^{d_I^{\bvec}}_H(D_k^n(\gamma_X), D_k^n(\gamma_Y))$. That is, the Hausdorff distance induced by $d_I^{\bvec}$ (\Cref{rem:dyninterleavingdistance}).
    \end{definition}

    This construction of a persistence set from a given dynamic metric space is stable:
    \begin{restatable}[Stability of dynamic curvature sets]{theorem}{stabrestate}\label{thm:stability}
        Let $\gamma_X = (X, d_X)$ and $\gamma_Y = (Y, d_Y)$ be two dynamic metric spaces. Then for any $k \in \mathbb{Z}_{\geq 0}$ and $n \in \mathbb{N}$, \[d_H\left(D_k^n(\gamma_X), D_k^n(\gamma_Y)\right) \leq 2 \cdot d_{\text{dyn}}(\gamma_X, \gamma_Y).\]
        where $d_{\text{dyn}}$ is the $\lambda=2$ generalized Gromov-Hausdorff distance from \cite{stablesignatures}; see \Cref{def:ddynmetric}.
    \end{restatable}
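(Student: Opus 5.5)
The plan is to reduce the statement to the stability theorem of Kim and M\'emoli for the spatiotemporal Rips construction, combined with a standard ``restriction of correspondences'' argument for curvature sets, in the style of the static Gromov result for $K_n$. Recall (\Cref{def:ddynmetric}) that $d_{\text{dyn}}(\gamma_X,\gamma_Y)$ is the infimum over correspondences $R \subseteq X\times Y$ and over $\varepsilon\ge 0$ such that $R$ is an $\varepsilon$-tempered correspondence, i.e.\ for all $(x,y),(x',y')\in R$ and all $t$ there are times $s,s'\in[t-\varepsilon,t+\varepsilon]$ with $d_X(t)(x,x')\le d_Y(s)(y,y')+2\varepsilon$ and $d_Y(t)(y,y')\le d_X(s')(x,x')+2\varepsilon$ (with $\lambda=2$). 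The key external input, from \cite{spatiotemporal}, is
\[
d_I^{\bvec}\bigl(H_k(\mathcal{R}^{\mathrm{lev}}(\gamma_A)),H_k(\mathcal{R}^{\mathrm{lev}}(\gamma_B))\bigr)\le 2\, d_{\text{dyn}}(\gamma_A,\gamma_B)
\]
for any two DMSs $\gamma_A,\gamma_B$. I will assume this result. If it is stated there only for the interleaving of the filtrations, it follows by applying $H_k$, since functors preserve interleavings.

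\textbf{Step 1.} Fix $\eta > d_{\text{dyn}}(\gamma_X,\gamma_Y)$ and an $\eta$-tempered correspondence $R$. Take any $\bbv\in D_k^n(\gamma_X)$, say $\bbv=H_k(\mathcal{R}^{\mathrm{lev}}(\gamma_S))$ with $S\subseteq X$ and $|S|\le n$. For each $s\in S$ choose some $y_s$ with $(s,y_s)\in R$, and set $T=\{y_s : s\in S\}$. Then $|T|\le|S|\le n$, so $\gamma_T\in K_n(\gamma_Y)$.

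\textbf{Step 2.} Define $R_{S} = \{(s,y_s): s\in S\}\subseteq S\times T$. It is a correspondence between $S$ and $T$: it is surjective onto $T$ by construction, and every $s\in S$ appears. The temperedness inequalities are conditions on pairs of elements of the correspondence, and the restricted pseudometrics agree with $d_X,d_Y$. Hence $R_S$ inherits $\eta$-temperedness from $R$, giving $d_{\text{dyn}}(\gamma_S,\gamma_T)\le\eta$.

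\textbf{Step 3.} The cited stability gives $d_I^{\bvec}(\bbv, H_k(\mathcal{R}^{\mathrm{lev}}(\gamma_T)))\le 2\eta$, so $\inf_{\bbw} d_I^{\bvec}(\bbv,\bbw)\le 2\eta$. The symmetric argument, choosing preimages in $X$ for points of a subset of $Y$, bounds the other term of \Cref{def:persistencesetmetric}. Letting $\eta\downarrow d_{\text{dyn}}(\gamma_X,\gamma_Y)$ then yields the claim.

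The main obstacle is Step 2. Specifically, I must check that the precise form of \Cref{def:ddynmetric} is hereditary under restriction of correspondences. In particular, any time-shift or ``for all $t$ there exists $s$'' quantifiers must involve only the two pairs being compared, and not global data such as a single time reparametrization that depends on all of $X$. If the definition instead uses a global distortion functional, the same restriction argument applies, because distortion over a subset of pairs is at most distortion over all pairs.

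A secondary point is the factor $2$. It must match the constant in Kim--M\'emoli's stability theorem under the $\lambda=2$ convention. If their bound is stated with constant $1$ relative to a differently normalized $d_{\text{dyn}}$, I would instead construct the interleaving directly. Starting from $R_S$, I would build simplicial maps $\mathcal{R}_\delta(\gamma_S(I))\to\mathcal{R}_{\delta+2\eta}(\gamma_T(I^{\eta}))$, where $I^\eta$ is the $\eta$-thickening of $I$, and verify that they are contiguous to the inclusions.
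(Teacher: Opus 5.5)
Your proposal is correct and follows the paper's proof essentially step for step: the paper's \Cref{lem:dynghsubset} is exactly your Steps 1--2 (pick a preimage in the tripod for each point of $S$, let $T$ be its image, and observe that the pairwise $(2,\varepsilon)$-tripod conditions survive restriction), after which the paper applies \cite[Theorem 4.1]{spatiotemporal} with the same constant $2$ and symmetrizes. One small correction: the paper's condition reads $\min_{t'\in[t]^{+\varepsilon}} d_X(t')(x,x') \le d_Y(t)(y,y')+2\varepsilon$ (and symmetrically with $X$ and $Y$ swapped), not the reversed form you recalled; since it is still a condition on pairs of matched points only, your hereditary argument goes through unchanged.
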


    In the next section, we will explore an additional metric $d_E$ on $\bdyn$ indexed modules that induces a Hausdorff distance $d_H^{d_E}$ which is also stable; see \Cref{cor:dEhausdorffstable}.
    
    For a self-contained exposition on $d_{\text{dyn}}$, see \Cref{app:lambdaslackdistance}. For a proof of \Cref{thm:stability}, refer to \Cref{sec:stabilityproof}.

\section{The Erosion Distance}
    In this section we will recall the erosion distance (first introduced by Patel in \cite{patelerosion} and later generalized by Puuska in \cite{puuskaerosion}), and explain its connections to other metrics, such as the interleaving distance $d_I^{\bvec}$. This will be the main target for our algorithmic work.

    For convenience within this section, when the poset $\mathcal{P}$ is not specified, we should interpret $\mathbf{Seg}$ to mean $\mathbf{Seg}(\mathbb{R}^d)$ where $\mathbb{R}^d$ is regarded as a poset category with the product order. See \Cref{def:segment}.
    
    \subsection{The Erosion Distance $d_E$ on $\mathbb{R}^n$}
        Consider the $d$ dimensional poset $\mathbb{R}^d$ with the product order. 

        \begin{definition}
            The \emph{$\varepsilon$-thickening} (for nonnegative $\varepsilon\in\mathbb{R}$) of a segment $I=[p,\,q]\in\mathbf{Seg}$ is the segment $[p^{-\varepsilon},\,q^{+\varepsilon}]$ denoted by $I^{+\varepsilon}$. If $p\in\mathbb{R}^d$ then $p^{+\varepsilon}=\sum_i (p^{(i)} +\varepsilon )\vec{e}_i$ and $p^{-\varepsilon}=\sum_i (p^{(i)} -\varepsilon )\vec{e}_i$.
        \end{definition}

        Note that $I\leq I^{+\varepsilon}$ for all $I\in\mathbf{Seg}$. Now we properly define the rank invariant of a $\mathbb{R}^d$ module.

        \begin{definition}[\cite{rankinv}]
            Given a $\mathbb{R}^d$ module $\mathbb{V}:\mathbb{R}^d \to \bvec$, its \emph{rank invariant} is a functor $\mathrm{rk}_\mathbb{V}:\mathbf{Seg}\to\mathbb{N}^\text{op}$ which sends $[p,\,q]\in\mathbf{Seg}$ to the rank of the structure map from $p$ to $q$ in $\mathbb{V}$.
        \end{definition}

        Note that we are asserting that for any $\mathbb{R}^d$ module $\mathbb{V}$ and $I\leq J\in\mathbf{Seg}$ (and in particular for $I\leq I^{+\varepsilon}$) we have $\mathrm{rk}_\mathbb{V}(J)\leq\mathrm{rk}_\mathbb{V}(I)\in\mathbb{N}$. 

        Now we specialize the erosion distance \cite{patelerosion} to obtain a metric between rank invariants of $\mathbb{R}^d$ module.

        \begin{definition}
            The \emph{erosion distance} $d_\text{E}$ between rank invariants  $\mathrm{rk}_\mathbb{V}$ and $\mathrm{rk}_\mathbb{W}$ is defined by
            \[
            d_\text{E}(\mathrm{rk}_\mathbb{V},\,\mathrm{rk}_\mathbb{W}) \defeq \inf\Bigl\{ \varepsilon>0\;\Bigr\rvert \; \forall I\in\operatorname{\mathbf{Seg}},\, \mathrm{rk}_\mathbb{V}(I^{+\varepsilon})\leq \mathrm{rk}_\mathbb{W}(I) ,\, \mathrm{rk}_\mathbb{W}(I^{+\varepsilon})\leq \mathrm{rk}_\mathbb{V}(I) \Bigr\}.
            \]
        \end{definition}
    \subsection{The Erosion Distance $d_E$ on $\bdyn$}
        We now introduce the Erosion Distance $d_E$ on $\bdyn$-indexed modules, with presentation in the style of \cite{spatiotemporal}. This is similar to the $\mathbb{R}^n$, as $\bdyn$ can be identified with a subset of \emph{admissible points} in $\mathbb{R}^{\mathrm{op}} \times \mathbb{R} \times \mathbb{R}$.

        \begin{definition}[\cite{spatiotemporal}]
            Define the poset $\mathbb{R}_{\times}^3$ to be $\mathbb{R}^{\mathrm{op}} \times \mathbb{R} \times \mathbb{R}$ with the product order. Define the poset $\mathbb{R}_{\times}^6$ to be $\mathbb{R} \times \mathbb{R}^{\mathrm{op}} \times \mathbb{R}^{\mathrm{op}} \times \mathbb{R}^{\mathrm{op}} \times \mathbb{R} \times \mathbb{R}$ with the product order.

            Intuitively, $\mathbb{R}_{\times}^6$ should be viewed as $\left(\mathbb{R}_{\times}^3\right)^{\mathrm{op}} \times \mathbb{R}_{\times}^3$.
        \end{definition}

        \begin{definition}[Admissible point \cite{spatiotemporal}]
            We say a point $(t_1, t_2, \delta) \in \mathbb{R}_{\times}^3$ is \emph{admissible} if $t_1 \leq t_2$ and $\delta \geq 0$. We can identify the poset $\bdyn$ with the sub-poset of admissible points in $\mathbb{R}_{\times}^3$.

            We say a point $\mathbf{a} = (a_1, a_2, a_3, a_4, a_5, a_6) \in \mathbb{R}_{\times}^6$ is \emph{admissible} if $(a_1, a_2, a_3), (a_4, a_5, a_6) \in \bdyn$ (under the identification above) and that $(a_1, a_2, a_3) \leq (a_4, a_5, a_6)$ in $\mathbb{R}_{\times}^3$. Note that if such an $\mathbf{a}$ is admissible, we can identify $\mathbf{a}$ with the segment $[(a_1, a_2, a_3), (b_1, b_2, b_3)]$ in $\mathbf{Seg}({\bdyn})$, after identifying $(a_1, a_2, a_3)$ and $(a_4, a_5, a_6)$ with points in $\bdyn$.
            
            If a point $\mathbf{a} \in \mathbb{R}_{\times}^6$ is not admissible, we say $\mathbf{a}$ is \emph{non-admissible}. Furthermore, a non-admissible point $\mathbf{a} \in \mathbb{R}_{\times}^6$ is called \emph{trivially non-admissible} if there exists no admissible $\mathbf{b} \in \mathbb{R}_{\times}^6$ such that $\mathbf{b} < \mathbf{a}$.
        \end{definition}

        With this, we are now prepared to define the rank invariant on $\bdyn$.
        \begin{definition}[Adapted rank invariant on $\bdyn$ \cite{spatiotemporal}] \label{def:bdynrankinvariant}
            Suppose $\bbv: \bdyn \rightarrow \bvec$ is a persistence module. The \emph{adapted rank invariant} of $\bdyn$ is the functor $\mathrm{rk}_\bbv: \mathbb{R}_{\times}^6 \rightarrow \mathbb{Z}_+$ defined as follows: for $\mathbf{a} = (a_1, \ldots, a_6)$,
            \[\mathrm{rk}_\bbv(\mathbf{a}) := \begin{cases} \mathrm{rank}(\bbv(([a_1, a_2], a_3) \leq ([a_4, a_5], a_6)) & \mathbf{a} \text{ is admissible} \\ \infty & \mathbf{a} \text{ is trivially non-admissible} \\ 0 & \text{otherwise} \end{cases}\]
            The fact that $\mathrm{rk}_\bbv$ is a functor can be seen in \cite[Proposition 4.3]{spatiotemporal}.
        \end{definition}

        To define a notion of Erosion distance, we need a notion of thickening:
        \begin{definition}
            For a point $\mathbf{p} \in \mathbb{R}_{\times}^3$ and an $\varepsilon > 0$, define the \emph{shift} $\mathbf{p}^{+\varepsilon} := \mathbf{p} + \varepsilon \cdot (-1, 1, 1)$. Analogously define the shift $\mathbf{p}^{-\varepsilon} := \mathbf{p} - \varepsilon \cdot (-1, 1, 1)$.

            For a point $\mathbf{a} \in \mathbb{R}_{\times}^6$ and an $\varepsilon > 0$, define the shift 
            \[ \mathbf{a}^{+\varepsilon} := \mathbf{a} + \varepsilon \cdot (1, -1, -1, -1, 1, 1) \]
            and analogously 
            \[ \mathbf{a}^{-\varepsilon} := \mathbf{a} - \varepsilon \cdot (1, -1, -1, -1, 1, 1). \]
        \end{definition}
        Note that while the notation for $+\epsilon$ over $\mathbb{R}_{\times}^6$ is the same as $+\epsilon$ over $\mathbf{Seg}$, they are not immediately the same operation. When referring to variables in $\mathbb{R}_{\times}^6$, we will use boldface to distinguish them.
        
        The functoriality of the adapted rank invariant allows us to construct the erosion distance on $\bdyn$:
        \begin{definition}
            For $\bbv, \bbw: \bdyn \rightarrow \bvec$, the \emph{erosion distance} $d_\text{E}$ between adapted rank invariants $\mathrm{rk}_\mathbb{V}$ and $\mathrm{rk}_\mathbb{W}$ is defined by
            \[
            d_\text{E}(\mathrm{rk}_\mathbb{V},\,\mathrm{rk}_\mathbb{W}) \defeq \inf\Bigl\{ \varepsilon>0\;\Bigr\rvert \; \forall I\in \mathbb{R}_{\times}^6,\, \mathrm{rk}_\mathbb{V}(I^{+\varepsilon})\leq \mathrm{rk}_\mathbb{W}(I) ,\, \mathrm{rk}_\mathbb{W}(I^{+\varepsilon})\leq \mathrm{rk}_\mathbb{V}(I) \Bigr\}.
            \]
        \end{definition}
        Note that this is similar to the earlier erosion distance on $\mathbb{R}^d$, except to bypass issues of admissibility we work over $\mathbb{R}_{\times}^6$ instead of $\mathbf{Seg}(\bdyn)$.
    
    \subsection{Application of the erosion distance}
        \begin{remark}\label{rem:erosionlowerboundsinterleaving}
            By \cite[Theorem 8.2]{patelerosion} the erosion distance is a lower bound for the interleaving distance. More precisely in the case of $\bdyn$ indexed modules $\bbv, \bbw$ the following holds: \[d_E(\mathrm{rk}_\bbv, \mathrm{rk}_\bbw) \leq d_I^{\bvec}(\bbv, \bbw).\]
        \end{remark}
        This motivates the erosion distance as a computational proxy for the interleaving distance. As the erosion distance is a metric on $\bdyn$ indexed modules, we have its associated Hausdorff distance on persistence sets:
        \begin{definition}\label{def:persistenceseterosion}
            For $n, k \in \mathbb{Z}_+$ and $\gamma_X, \gamma_Y$ two dynamic metric spaces, we make the definition \[d_H^{\mathrm{ero}}(D^n_k(\gamma_X), D^n_k(\gamma_Y)) := d_H^{d_E}(D^n_k(\gamma_X), D^n_k(\gamma_Y)),\] where $d_E$ is the erosion distance between rank invariants of $\bdyn$ modules. Note that this is a specialization of \Cref{def:persistencesetmetric}.
        \end{definition}
        Which leads to an immediate corollary,
        \begin{corollary} \label{cor:dEhausdorffstable}
            Combining \Cref{rem:erosionlowerboundsinterleaving}, \Cref{def:persistenceseterosion} and \Cref{thm:stability}, we can see that for any $n, k \in \mathbb{Z}_{\geq 0}$ and $\gamma_X, \gamma_Y$ two DMSs, we have \[d_H^{\mathrm{ero}}(D^n_k(\gamma_X), D^n_k(\gamma_Y)) \leq d_H(D^n_k(\gamma_X), D^n_k(\gamma_Y)) \leq 2 \cdot d_{\mathrm{dyn}}(\gamma_X, \gamma_Y).\]
        \end{corollary}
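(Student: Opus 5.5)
The corollary is a chain of two inequalities, and the plan is to prove each one from an earlier result. The right-hand inequality is exactly \Cref{thm:stability}, since $d_H(D^n_k(\gamma_X), D^n_k(\gamma_Y))$ is by definition the Hausdorff distance induced by $d_I^{\bvec}$. That step costs nothing.

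The left-hand inequality should follow from the pointwise bound in \Cref{rem:erosionlowerboundsinterleaving}, which gives $d_E(\mathrm{rk}_\bbv,\mathrm{rk}_\bbw)\le d_I^{\bvec}(\bbv,\bbw)$ for every pair of $\bdyn$-modules $\bbv,\bbw$. I will prove a general monotonicity fact: if $\rho\le\rho'$ pointwise on pairs of modules, then $d_H^{\rho}\le d_H^{\rho'}$ on persistence sets.

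To prove monotonicity, fix $\bbv\in D^n_k(\gamma_X)$. Taking the infimum over $\bbw\in D^n_k(\gamma_Y)$ preserves the inequality, so $\inf_{\bbw}\rho(\bbv,\bbw)\le\inf_{\bbw}\rho'(\bbv,\bbw)$. Taking the supremum over $\bbv$ preserves it again. The same argument applies to the symmetric term with the roles of $\gamma_X$ and $\gamma_Y$ swapped. Finally, $\max\{a,b\}\le\max\{a',b'\}$ whenever $a\le a'$ and $b\le b'$.

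Apply this with $\rho=d_E$ and $\rho'=d_I^{\bvec}$, using \Cref{def:persistenceseterosion} to identify $d_H^{\mathrm{ero}}$ with $d_H^{d_E}$. Chaining the result with \Cref{thm:stability} gives the full statement.

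There is essentially no obstacle here. The only point needing care is the convention for $n=0$ or empty persistence sets, where the infima and suprema take values $\pm\infty$. Monotonicity still holds in the extended reals, so the argument goes through unchanged in those cases.
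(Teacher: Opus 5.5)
Your proposal is correct and takes the same route as the paper. The right-hand inequality is \Cref{thm:stability} verbatim, and the left-hand one follows by passing the pointwise bound $d_E \le d_I^{\bvec}$ of \Cref{rem:erosionlowerboundsinterleaving} through the Hausdorff construction; the paper leaves that monotonicity step implicit, and you spell it out.
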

        The computation of $d_E$ and $d_E^{\mathrm{ero}}$ will be addressed in the remainder of the algorithmic and experimental sections. We will address the case of computing erosion distance between rank invariants of $\mathbb{R}^d$-indexed modules.

\section{Algorithmic Results For Antichain-Decomposable Modules}

    In this section we introduce a new algorithm for computing a variant of the erosion distance introduced in \cite{patelerosion}, between any two $\mathbb{R}^d$-indexed antichain-decomposable modules (see \Cref{def:acdmodules}) after discretization. In particular, if the discretized domain is $[n]^d$, then the runtime of our algorithm is $O(n^{d-1} d \log{n})$. It is worth emphasizing that this holds for arbitrary poset-indexed ACD modules, and is independent from the $(2k+2)$ construction.

    \begin{framed}
    \textit{All persistence modules in this section should be assumed to be antichain-decomposable (see \Cref{def:acdmodules}). In particular, recall that ACD modules are thin and have convex support (see \Cref{rem:acdproperties}). Additionally, all $\mathbb{R}^d$-modules and $\bdyn$-modules are assumed to be bounded in the sense of \Cref{def:boundedacdmodule}.}
    \end{framed}    
    \begin{remark}
        While the results in this section are stated for ACD modules, they hold in slightly more generality. In particular, as the erosion distance is a metric on the rank invariants, only the rank of structure maps matters. As such, the algorithm outlined in \Cref{alg:dEalgo} is applicable to any thin module with convex support. This is generalized further in \Cref{sect:dealgoextension}, where the thinness requirement is relaxed.
    \end{remark}
        \subsection{Simplifications of $d_E$}

            To understand the algorithm in \Cref{dEquadratic} we will develop some results about the rank invariant in the thin module case. First we reformulate the definition of the erosion distance to emphasize that each segment in $\mathbf{Seg}$ has some threshold $\varepsilon$ value above which it satisfies the rank invariant inequalities in the erosion distance definition.

            \begin{definition}
                For $I\in\mathbf{Seg}$ and ACD $\mathbb{R}^d$ modules $\mathbb{V}$ and $\mathbb{W}$, we define \[\mathcal{E}_{\mathbb{V},\mathbb{W}}(I)=\inf\{ \varepsilon>0 \; |\; \mathrm{rk}_\mathbb{V}(I^{+\varepsilon})\leq \mathrm{rk}_\mathbb{W}(I) ,\, \mathrm{rk}_\mathbb{W}(I^{+\varepsilon})\leq \mathrm{rk}_\mathbb{V}(I)\}.\]
            \end{definition}
            Which allows for a simpler formulation of $d_E$, namely:
            \begin{remark}\label{rem:dEsup}
                We see that
                \[d_E(\mathrm{rk}_\mathbb{V},\,\mathrm{rk}_\mathbb{W})=\sup_{I\in\mathbf{Seg}} \mathcal{E}_{\mathbb{V},\mathbb{W}}(I).
                \]
            \end{remark}
            \begin{definition}
                 For $I\in\mathbf{Seg}$ and an ACD $\mathbb{R}^d$ module $\mathbb{V}$, we define \[\mathcal{E}_\mathbb{V}(I):=\inf\{\varepsilon>0\;|\;\mathrm{rk}_\mathbb{V}(I^{+\varepsilon})=0\}.\]
            \end{definition}
            Now, we will analyze the behavior of $\mathcal{E}_{\bbv, \bbw}$.
            \begin{lemma} \label{dEepscasework}
                Let $\mathbb{V}$, $\mathbb{W}$ be ACD $\mathbb{R}^d$ modules and $I\in\mathbf{Seg}$.

                \begin{itemize}
                    \item If $\mathrm{rk}_\mathbb{V}(I)=0$ and $\mathrm{rk}_\mathbb{W}(I)=0$ then $\mathcal{E}_{\mathbb{V},\,\mathbb{W}}(I)=0$.
                    \item If $\mathrm{rk}_\mathbb{V}(I)=1$ and $\mathrm{rk}_\mathbb{W}(I)=0$ then $\mathcal{E}_{\mathbb{V},\,\mathbb{W}}(I)=\mathcal{E}_\mathbb{V}(I)$.
                    \item If $\mathrm{rk}_\mathbb{V}(I)=0$ and $\mathrm{rk}_\mathbb{W}(I)=1$ then $\mathcal{E}_{\mathbb{V},\,\mathbb{W}}(I)=\mathcal{E}_\mathbb{W}(I)$.
                    \item If $\mathrm{rk}_\mathbb{V}(I)=1$ and $\mathrm{rk}_\mathbb{W}(I)=1$ then $\mathcal{E}_{\mathbb{V},\,\mathbb{W}}(I)=0$.
                \end{itemize}
            \end{lemma}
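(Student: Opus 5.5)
The plan is to reduce everything to two facts about thin modules: the rank invariant takes values in $\{0,1\}$, and it is monotone under thickening. Since ACD modules are thin (\Cref{rem:acdproperties}), every structure map goes between spaces of dimension at most $1$, so $\mathrm{rk}_\bbv(I),\mathrm{rk}_\bbw(I)\in\{0,1\}$ for every $I\in\mathbf{Seg}$. Since $I\le I^{+\varepsilon}$, functoriality of the rank invariant gives $\mathrm{rk}_\bbv(I^{+\varepsilon})\le \mathrm{rk}_\bbv(I)$, and likewise for $\bbw$. With these two facts, each of the four cases is a direct inspection of the two defining inequalities
\[
\mathrm{rk}_\bbv(I^{+\varepsilon})\le \mathrm{rk}_\bbw(I), \qquad \mathrm{rk}_\bbw(I^{+\varepsilon})\le \mathrm{rk}_\bbv(I)
\]
in $\mathcal{E}_{\bbv,\bbw}(I)$.

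In the first case, both right-hand sides are $0$. Monotonicity gives $\mathrm{rk}_\bbv(I^{+\varepsilon})\le \mathrm{rk}_\bbv(I)=0$ and similarly for $\bbw$, so both inequalities hold for every $\varepsilon>0$. The infimum over all positive $\varepsilon$ is therefore $0$.

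The fourth case works the same way. Both right-hand sides are $1$, and both left-hand sides are at most $1$ by thinness, so again every $\varepsilon>0$ is admissible and the infimum is $0$.

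For the second case ($\mathrm{rk}_\bbv(I)=1$, $\mathrm{rk}_\bbw(I)=0$):
\begin{itemize}
\item The second inequality reads $\mathrm{rk}_\bbw(I^{+\varepsilon})\le 1$, which always holds by thinness.
\item The first reads $\mathrm{rk}_\bbv(I^{+\varepsilon})\le 0$, i.e. $\mathrm{rk}_\bbv(I^{+\varepsilon})=0$.
\end{itemize}
So the admissible set of $\varepsilon$ coincides exactly with $\{\varepsilon>0:\mathrm{rk}_\bbv(I^{+\varepsilon})=0\}$, whose infimum is $\mathcal{E}_\bbv(I)$ by definition. The third case is symmetric with the roles of $\bbv$ and $\bbw$ swapped.

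The only points needing care are that thinness is what caps the ranks at $1$ (without it the fourth case fails), and that the infimum is taken over $\varepsilon>0$, so "all $\varepsilon>0$ admissible" correctly gives value $0$ rather than an empty infimum. I expect no real obstacle here; the main thing to check is that the monotonicity $\mathrm{rk}(J)\le\mathrm{rk}(I)$ for $I\le J$, asserted earlier in the paper, is being applied in the right direction.
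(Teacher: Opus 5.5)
Your proof is correct and follows the paper's argument, a case-by-case inspection of the two defining inequalities in which the mixed case reduces to $\mathrm{rk}_\bbv(I^{+\varepsilon})=0$; you are more careful than the paper, which only checks $\varepsilon=0$ and leaves the monotonicity of the rank invariant implicit. One correction to your closing remark: thinness is not needed in the fourth case, nor in the second, because monotonicity alone gives $\mathrm{rk}_\bbv(I^{+\varepsilon})\le \mathrm{rk}_\bbv(I)$ and $\mathrm{rk}_\bbw(I^{+\varepsilon})\le \mathrm{rk}_\bbw(I)$, and with the given rank hypotheses these bounds already imply both inequalities; thinness only matters later, to make the four cases exhaustive.
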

            \begin{proof}
                In the cases that $\mathrm{rk}_\mathbb{V}(I)=\mathrm{rk}_\mathbb{W}(I)$ the condition that $\mathrm{rk}_\mathbb{V}(I^{+\varepsilon})\leq \mathrm{rk}_\mathbb{W}(I) ,\, \mathrm{rk}_\mathbb{W}(I^{+\varepsilon})\leq \mathrm{rk}_\mathbb{V}(I)$ is satisfied for $\varepsilon=0$. 

                In the case that $\mathrm{rk}_\mathbb{V}(I)=1$ and $\mathrm{rk}_\mathbb{W}(I)=0$, the condition that $\mathrm{rk}_\mathbb{W}(I^{+\varepsilon})\leq \mathrm{rk}_\mathbb{V}(I)$ is satisfied for $\varepsilon=0$. Thus $\mathcal{E}_{\mathbb{V},\,\mathbb{W}}(I)$ will evaluate to the minimal $\varepsilon$ with $\mathrm{rk}_\mathbb{V}(I^{+\varepsilon})\leq \mathrm{rk}_\mathbb{W}(I)=0$ or equivalently with $\mathrm{rk}_\mathbb{V}(I^{+\varepsilon})=0$. Which is precisely $\mathcal{E}_\mathbb{V}(I)$.

                The final case symmetrically follows from the previous one.
            \end{proof}
            \begin{definition}
                For ACD $\mathbb{R}^d$ modules $\mathbb{V}$ and $\mathbb{W}$ we define \[\mathbf{Seg}_{\mathbb{V},\,\mathbb{W}}:=\{I\in\mathbf{Seg}\;|\;\mathrm{rk}_\mathbb{V}(I)=1,\,\mathrm{rk}_\mathbb{W}(I)=0\}.\]
            \end{definition}
            \begin{remark} \label{dEsymchar}
                For $I\in\mathbf{Seg}$ and ACD $\mathbb{R}^d$ modules 
                $\mathbb{V}$ and $\mathbb{W}$, $I\notin \mathbf{Seg}_{\mathbb{V},\,\mathbb{W}}\cup\mathbf{Seg}_{\mathbb{W},\,\mathbb{V}}$ implies $\mathcal{E}_{\mathbb{V},\,\mathbb{W}}(I)=0$ (by \Cref{dEepscasework}), thus by \Cref{rem:dEsup},
                \[d_E(\mathrm{rk}_\mathbb{V},\,\mathrm{rk}_\mathbb{W})=\sup_{I\in\mathbf{Seg}} \mathcal{E}_{\mathbb{V},\mathbb{W}}(I)=\sup_{I\in\mathbf{Seg}_{\mathbb{V},\,\mathbb{W}}\cup\mathbf{Seg}_{\mathbb{W},\,\mathbb{V}}} \mathcal{E}_{\mathbb{V},\mathbb{W}}(I).\]
                Hence
                \[d_E(\mathrm{rk}_\mathbb{V},\,\mathrm{rk}_\mathbb{W})=\max\left(\sup_{I\in\mathbf{Seg}_{\mathbb{V},\,\mathbb{W}}}\mathcal{E}_{\mathbb{V},\mathbb{W}}(I),\,\sup_{I\in\mathbf{Seg}_{\mathbb{W},\,\mathbb{V}}}\mathcal{E}_{\mathbb{V},\mathbb{W}}(I)\right).
                \]
                By applying \Cref{dEepscasework} once more we see that \[d_E(\mathrm{rk}_\mathbb{V},\,\mathrm{rk}_\mathbb{W})=\max\left(\sup_{I\in\mathbf{Seg}_{\mathbb{V},\,\mathbb{W}}}\mathcal{E}_{\mathbb{V}}(I),\,\sup_{I\in\mathbf{Seg}_{\mathbb{W},\,\mathbb{V}}}\mathcal{E}_{\mathbb{W}}(I)\right).\]
            \end{remark}

            We now develop some additional reformulations to eventually minimize the dimension of the subset of elements of $\mathbb{R}^d$ we must include in the suprema above, this will eventually lead to reducing computational complexity.
            \begin{definition} \label{dEshiftdef}
                For ACD $\mathbb{R}^d$ modules $\mathbb{V}$, $\mathbb{W}$, and $p,\,q\in\mathbb{R}^d$ we define:
                \begin{itemize}
                    \item $\mathbb{R}^d_{\mathbb{V},\,\mathbb{W}}:=\{u\in\mathbb{R}^d\;|\;\mathbb{V}(u)\cong\mathbb{F},\,\mathbb{W}(u)\cong0\},$

                    \item $\mathbb{R}^d_{\mathbb{V}}:=\{u\in\mathbb{R}^d\;|\;\mathbb{V}(u)\cong\mathbb{F}\},$

                    \item $\mathcal{E}^\text{small}_\mathbb{V}(p):=\sup_{q\geq p\in{\mathbb{R}^d}}\mathcal{E}_\mathbb{V}([p,\,q]),$

                    \item $\mathcal{E}^\text{large}_\mathbb{V}(q):=\sup_{p\leq q\in{\mathbb{R}^d}}\mathcal{E}_\mathbb{V}([p,\,q]).$
                \end{itemize}
            \end{definition}
            \begin{remark}\label{dEcubic}
                By convexity we see that for $[p,\,q]\in\mathbf{Seg}$ and ACD $\mathbb{R}^d$ modules $\mathbb{V}$ and $\mathbb{W}$, $[p,\,q]\in\mathbf{Seg}_{\mathbb{V},\,\mathbb{W}}$ if and only if $p\in\mathbb{R}^d_{\mathbb{V},\,\mathbb{W}}$ and $q\in\mathbb{R}^d_\mathbb{V}$, or $q\in\mathbb{R}^d_{\mathbb{V},\,\mathbb{W}}$ and $p\in\mathbb{R}^d_\mathbb{V}$. Thus \[\sup_{I\in\mathbf{Seg}_{\mathbb{V},\,\mathbb{W}}}\mathcal{E}_{\mathbb{V}}(I)=\max\left(\sup_{p\in\mathbb{R}^d_{\mathbb{V},\,\mathbb{W}}}\mathcal{E}^\text{small}_\mathbb{V}(p),\,\sup_{q\in\mathbb{R}^d_{\mathbb{V},\,\mathbb{W}}}\mathcal{E}^\text{large}_\mathbb{V}(q)\right).\] Applying \Cref{dEsymchar} we see that if we define
                \[
                    \mathcal{E}^\ast(\mathbb{V},\,\mathbb{W})=\max\left(\sup_{p\in\mathbb{R}^d_{\mathbb{V},\,\mathbb{W}}}\mathcal{E}^\text{small}_\mathbb{V}(p),\,\sup_{q\in\mathbb{R}^d_{\mathbb{V},\,\mathbb{W}}}\mathcal{E}^\text{large}_\mathbb{V}(q)\right)
                \]
                then 
                \[    d_E(\mathrm{rk}_\mathbb{V},\,\mathrm{rk}_\mathbb{W})=\max\left(\mathcal{E}^\ast(\mathbb{V},\,\mathbb{W}),\,\mathcal{E}^\ast(\mathbb{W},\,\mathbb{V})\right).
                \]
            \end{remark}

            \begin{lemma}\label{dEprojection}
                For any segment $[p, q]$ of $\mathbb{R}^d$, we have $\mathcal{E}_\mathbb{V}([p,\,p]) \geq \mathcal{E}_\mathbb{V}([p,\,q])$. Similarly, for such a $[p, q]$ we also have $\mathcal{E}_\mathbb{V}([q,\,q]) \geq \mathcal{E}_\mathbb{V}([p,\,q])$.

                As an immediate consequence, we have for any $p, q \in \mathbb{R}^d$ that \[\mathcal{E}^{\mathrm{small}}_\bbv(p) = \mathcal{E}_\bbv([p, p]) \qquad \mathcal{E}^{\mathrm{large}}_\bbv(q) = \mathcal{E}_\bbv([q, q]).\]
            \end{lemma}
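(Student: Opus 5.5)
The plan is to compare the sets of admissible $\varepsilon$ directly: every $\varepsilon$ that kills the rank of $[p,p]^{+\varepsilon}$ also kills the rank of $[p,q]^{+\varepsilon}$. Taking infima over these sets then gives the inequality, with no appeal to the ACD structure.

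First I will unwind the thickening. We have $[p,p]^{+\varepsilon}=[p^{-\varepsilon},p^{+\varepsilon}]$ and $[p,q]^{+\varepsilon}=[p^{-\varepsilon},q^{+\varepsilon}]$. Since $p\leq q$ in the product order, $p^{+\varepsilon}\leq q^{+\varepsilon}$. Hence $[p,p]^{+\varepsilon}\subseteq[p,q]^{+\varepsilon}$, i.e.\ $[p,p]^{+\varepsilon}\leq[p,q]^{+\varepsilon}$ in $\mathbf{Seg}$.

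Next I will use the monotonicity of the rank invariant noted after its definition: $I\leq J$ implies $\mathrm{rk}_\bbv(J)\leq\mathrm{rk}_\bbv(I)$. Concretely, the structure map $\bbv(p^{-\varepsilon}\leq q^{+\varepsilon})$ factors through $\bbv(p^{-\varepsilon}\leq p^{+\varepsilon})$. So $\mathrm{rk}_\bbv([p,p]^{+\varepsilon})=0$ forces $\mathrm{rk}_\bbv([p,q]^{+\varepsilon})=0$. This gives the inclusion
\[\{\varepsilon>0:\mathrm{rk}_\bbv([p,p]^{+\varepsilon})=0\}\subseteq\{\varepsilon>0:\mathrm{rk}_\bbv([p,q]^{+\varepsilon})=0\},\]
and taking infima yields $\mathcal{E}_\bbv([p,p])\geq\mathcal{E}_\bbv([p,q])$. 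If the left-hand set is empty, its infimum is $+\infty$ and the inequality holds trivially. Boundedness of the support also guarantees that these sets are nonempty anyway.

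The second claim is symmetric. Here $[q,q]^{+\varepsilon}=[q^{-\varepsilon},q^{+\varepsilon}]$, and $p^{-\varepsilon}\leq q^{-\varepsilon}$. So $[q,q]^{+\varepsilon}\leq[p,q]^{+\varepsilon}$, and this time the factorization is through the left endpoint.

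For the consequence, the inequality gives $\mathcal{E}^{\mathrm{small}}_\bbv(p)=\sup_{q\geq p}\mathcal{E}_\bbv([p,q])\leq\mathcal{E}_\bbv([p,p])$. The supremum is attained at $q=p$, which is allowed since $p\geq p$, so equality holds. The same argument gives $\mathcal{E}^{\mathrm{large}}_\bbv(q)=\mathcal{E}_\bbv([q,q])$.

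There is no genuine obstacle here. The only point needing care is the direction of the inclusion of thickened segments in $\mathbf{Seg}$, together with the fact that a larger segment has smaller or equal rank.
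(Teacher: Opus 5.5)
Your proposal is correct and follows essentially the same route as the paper: you show $[p,p]^{+\varepsilon}\subseteq[p,q]^{+\varepsilon}$ (and symmetrically for $[q,q]$), use that the rank invariant is order-reversing to compare the feasible sets of $\varepsilon$, and take infima. You also spell out the ``immediate consequence'' via the supremum being attained at $q=p$ (respectively $p=q$); this step is correct and only slightly more explicit than the paper, which leaves it as immediate.
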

            \begin{proof}
                We will show that $\mathcal{E}_\mathbb{V}([p,\,p]) \geq \mathcal{E}_\mathbb{V}([p,\,q])$, and the proof of $\mathcal{E}_\mathbb{V}([q,\,q]) \geq \mathcal{E}_\mathbb{V}([p,\,q])$ proceeds analogously.
                
                Let $\varepsilon > 0$ be arbitrary. Because $p \leq p^{+\varepsilon} \leq q^{+\varepsilon}$, we have that $[p, p]^{+\varepsilon} \subseteq [p, q]^{+\varepsilon}$ as segments. Because $\mathrm{rk}_\mathbb{V}$ is order-reversing, this implies that $\mathrm{rk}_\mathbb{V}([p, p]^{+\varepsilon}) \geq \mathrm{rk}_\mathbb{V}([p, q]^{+\varepsilon})$. Therefore, if $\mathcal{E}_{\bbv}([p, q]) \geq \varepsilon$ for any $\varepsilon$, then we also have that $\mathcal{E}_{\bbv}([p, p]) \geq \varepsilon$. Taking the infimum over $\varepsilon$ yields $\mathcal{E}_\mathbb{V}([p,\,p]) \geq \mathcal{E}_\mathbb{V}([p,\,q])$.
            \end{proof}

            By the results of this section, we have managed to manipulate $d_E$ into a form suitable for an algorithmic approach. 

        \subsection{Discretization and Algorithm}
            In order to represent the rank information of an antichain-decomposable $\bbv: \mathbb{R}^d \rightarrow \bvec$ module in a computer's memory, we introduce a notion of discretization. 
            Fix a discrete subset of $\mathbb{R}^d$: the lattice $[n]^d$ with the product order suffices (perhaps after translation or rotation). Then, we consider the $[n]^d$-indexed thin module whose support is $[n]^d \cap \supp \bbv$. This $[n]^d$ module will be the discretized version of $\mathbb{V}$ we will work with.
            
            Note that computing erosion distance between these discretized modules will always lower bound the erosion distance between undiscretized modules: this is because every segment in the discretized poset is also a segment in the undiscretized poset. 

            To quantify the precision of this approximation for $\bdyn$ we introduce parameters to the discretization process. Recalling that $\mathbf{Dyn}$ has two temporal and one spatial dimension, we choose the discrete sample as a sublattice with $S$ equally spaced points in the spatial direction and $T$ equally spaced points in each temporal direction. Hence the size of this subset is $ST^2$. However by convexity we can store it in $\mathcal{O}(T^2)$ space: for each $I \in \mathbf{Seg}(\mathbb{R})$, there is an associated interval $(s_I, t_I)$ (possibly empty) such that $(I, t) \in \supp \bbv$ if and only if $t \in (s_I, t_I)$. It suffices to just store this tuple of $(s_I, t_I)$: that is, for each time interval record the smallest and largest spatial scale for which it appears in $\mathbf{Dyn}$. 
            
            Analogously $\mathbb{R}^d$ can approximated by the discrete sublattice $[n]^d$ and the support of a discretized $\mathbb{R}^d$-module can be stored with $\mathcal{O}(n^{d-1})$ space: for each $x \in [n]^{d-1}$, by convexity it suffices to store the smallest and largest values of the $d$th coordinate that lie in the support at $x$.
            \begin{theorem}\label{dEquadratic}
                There exists an $\mathcal{O}(n^{d-1} d \log{n})$ algorithm to compute the erosion distance beween rank invariants of two antichain-decomposable $[n]^d$ modules. This specializes to an $\mathcal{O}((S+T)^2\log(S+T))$ algorithm to compute the erosion distance between antichain-decomposable $\mathbf{Dyn}$ modules.
            \end{theorem}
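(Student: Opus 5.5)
We reduce the erosion distance to pointwise quantities via \Cref{dEcubic} and \Cref{dEprojection}, which give
\[
d_E(\mathrm{rk}_\bbv,\mathrm{rk}_\bbw)=\max\bigl(\mathcal{E}^\ast(\bbv,\bbw),\mathcal{E}^\ast(\bbw,\bbv)\bigr),\qquad
\mathcal{E}^\ast(\bbv,\bbw)=\sup_{p\in\mathbb{R}^d_{\bbv,\bbw}}\max\bigl(\mathcal{E}_\bbv([p,p]^{+}),\mathcal{E}_\bbv([p,p]^{-})\bigr),
\]
where we abuse notation slightly: $\mathcal{E}_\bbv([p,p])$ is the least $\varepsilon$ with $\mathrm{rk}_\bbv([p^{-\varepsilon},p^{+\varepsilon}])=0$. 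We then exploit the thin, convex-support structure. For an ACD module, $\mathrm{rk}_\bbv([a,b])=1$ exactly when $a$ and $b$ lie in the same summand $S_\alpha$. Indeed, by convexity of the intervals the whole segment then lies in $S_\alpha$, and if $a,b$ lay in different summands they would be comparable, which the ACD condition forbids. So $\mathrm{rk}_\bbv([p^{-\varepsilon},p^{+\varepsilon}])=1$ if and only if both corners $p^{-\varepsilon}$ and $p^{+\varepsilon}$ lie in $\supp\bbv$, since comparability then forces a common summand. Because $\varepsilon\mapsto[p^{-\varepsilon},p^{+\varepsilon}]$ is increasing, the predicate is monotone in $\varepsilon$. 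Therefore $\mathcal{E}_\bbv([p,p])$ equals the smaller of two quantities: the first time the diagonal ray $p^{+\varepsilon}$ exits $\supp\bbv$, and the first time $p^{-\varepsilon}$ exits it. By convexity each ray meets $\supp\bbv$ in a single segment, so each exit time is well defined.

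The computation proceeds as follows. First, read both discretized modules in the compressed $\mathcal{O}(n^{d-1})$ representation, which stores for each column $x\in[n]^{d-1}$ the interval $[\ell_\bbv(x),u_\bbv(x)]$ of $d$th coordinates lying in the support. Second, for fixed $p$, decide whether $p^{+\varepsilon}\in\supp\bbv$ with one $\mathcal{O}(d)$ lookup: form the column index of $p^{+\varepsilon}$ and compare its last coordinate against $\ell,u$. Since membership along the ray is monotone, a binary search over $\varepsilon\in\{0,\dots,n\}$ finds each exit time in $\mathcal{O}(d\log n)$.

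Third, and this is the key counting step, we must avoid iterating over all $n^d$ points $p$. Fix a column $x$ and consider $\mathbb{R}^d_{\bbv,\bbw}$ restricted to that column. It is the interval $[\ell_\bbv,u_\bbv]$ minus $[\ell_\bbw,u_\bbw]$, so it has at most two pieces. Along the column, $\mathcal{E}_\bbv([p,p])$ is unimodal in the last coordinate: the forward exit time is nonincreasing and the backward exit time is nondecreasing as $p$ moves up the column. Their minimum is therefore maximized on each piece either at a piece endpoint or at the crossing point. Both the endpoints and the crossing can be found by a binary search in $\mathcal{O}(d\log n)$ per column. Summing over the $n^{d-1}$ columns and both orderings $(\bbv,\bbw)$ and $(\bbw,\bbv)$ gives $\mathcal{O}(n^{d-1}d\log n)$.

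The main obstacle is justifying this within-column monotonicity. Moving $p$ up by $e_d$ moves the ray $p+\varepsilon\mathbf{1}$ to $p+e_d+\varepsilon\mathbf{1}$, which is not simply a reparametrization of the old ray. The first approach to try is a convexity argument. If $p+\varepsilon\mathbf{1}\notin\supp\bbv$ while $p+e_d$ and $p+e_d+\varepsilon\mathbf{1}$ are in the support, we look for a contradiction via comparabilities and convexity of $\supp\bbv$ (\Cref{rem:acdproperties}). If this fails, the fallback is to compute the exit times for every point of the column incrementally with a two-pointer sweep. Exit times change by bounded amounts between adjacent $p$, so this costs amortized $\mathcal{O}(d)$ per point, but only at points where a difference appears. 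We then still need to argue that only $\mathcal{O}(\log n)$ candidate points per column matter.

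For $\bdyn$, identify $\bdyn$ with admissible points of $\mathbb{R}^3_\times$, discretized as $T\times T\times S$ with the spatial coordinate playing the role of the $d$th coordinate. There are $T^2$ columns, each searched in $\mathcal{O}(\log(S+T))$, giving $\mathcal{O}((S+T)^2\log(S+T))$. The adapted rank invariant's values on non-admissible points are $0$ or $\infty$ independently of the module, so they contribute nothing to the erosion comparison.
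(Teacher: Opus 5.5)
Your reduction to $\sup_{p\in\mathbb{R}^d_{\bbv,\bbw}}\mathcal{E}_\bbv([p,p])$ is correct. So is your description of $\mathcal{E}_\bbv([p,p])$ as the smaller of the forward and backward exit times of $p\pm\varepsilon\mathbf{1}$ from $\supp\bbv$. This is exactly the fact behind the paper's \textbf{Erosion1d}: the value at $p$ depends only on where $p$ sits inside the support interval of its diagonal fiber.

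The gap is in the counting step. You organize the search by columns and assert that the crossing of the two exit times can be found ``by a binary search in $\mathcal{O}(d\log n)$ per column''. With the tools you set up, each probe of a binary search over the last coordinate needs both exit times at that point. By your second step, each of these is itself a binary search of cost $\mathcal{O}(d\log n)$. The nested search therefore costs $\mathcal{O}(d\log^2 n)$ per column and $\mathcal{O}(n^{d-1}d\log^2 n)$ overall, which misses the claimed bound. On top of that, you leave the unimodality open, and your fallback, by your own account, still needs an unproved claim about $\mathcal{O}(\log n)$ candidate points.

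The unimodality is in fact easy. For $p$ in the support, $p\le p+\varepsilon\mathbf{1}\le p+e_d+\varepsilon\mathbf{1}$ and $p-\varepsilon\mathbf{1}\le p+e_d-\varepsilon\mathbf{1}\le p+e_d$. Convexity of $\supp\bbv$ then makes the forward exit time nonincreasing and the backward one nondecreasing along the column.

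Your column organization can be repaired without unimodality at all. On a piece $\{x\}\times[\alpha,\beta]$ of $\mathbb{R}^d_{\bbv,\bbw}$, binary search directly on $\varepsilon$. The test is whether the three intervals $[\alpha,\beta]$, $[\ell_\bbv(x+\varepsilon\mathbf{1}')-\varepsilon,\,u_\bbv(x+\varepsilon\mathbf{1}')-\varepsilon]$ and $[\ell_\bbv(x-\varepsilon\mathbf{1}')+\varepsilon,\,u_\bbv(x-\varepsilon\mathbf{1}')+\varepsilon]$ share a point. Here $\mathbf{1}'=(1,\ldots,1)\in\mathbb{Z}^{d-1}$, and an interval is empty when the column index leaves $[n]^{d-1}$.

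Each test costs $\mathcal{O}(d)$. The test is monotone in $\varepsilon$, because a witness $t$ for $\varepsilon$ is a witness for every smaller $\varepsilon'$: the points $(x,t)\pm\varepsilon'\mathbf{1}$ lie between $(x,t)-\varepsilon\mathbf{1}$ and $(x,t)+\varepsilon\mathbf{1}$, so convexity applies. This gives $\mathcal{O}(d\log n)$ per column and the stated total.

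The paper goes the other way and iterates over the diagonal fibers themselves. On each fiber the supports of $\bbv$ and $\bbw$ are single intervals, so the one-dimensional erosion is a constant-time formula. It obtains those intervals by a sweep that keeps, in a balanced BST, the columns whose support segment meets the current fiber. These columns are totally ordered, since they differ by multiples of $(1,\ldots,1)$, and there are $\mathcal{O}(n^{d-1})$ insertion and deletion events in total.

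Your handling of $\bdyn$ as a $T\times T\times S$ grid with $T^2$ columns is fine once the per-column cost is fixed. So is your handling of the non-admissible points of $\mathbb{R}_{\times}^6$.
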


            We will now detail this algorithm. This proceeds by sweep-line, for which the following definitions will be necessary.
            \begin{definition}
                For the discrete lattice $[n]^d$ under the product order, the diagonal projection is the map $\pi: [n]^d \rightarrow \mathbb{Z}^{d-1}$ given by \[(x_1, \ldots, x_{d-1}, x_d) \mapsto (x_d - x_1, \ldots, x_d - x_{d-1}).\]
                Note that each point in the base space $\pi([n]^d)$ uniquely identifies a line in $[n]^d$. For a $v \in \pi([n]^d)$, we will refer to $\pi^{-1}(v)$ as the fiber of $v$.
            \end{definition}

            With this, we can now construct the algorithm.
            \paragraph*{The Algorithm.}
                \begin{algorithmic} \label{alg:dEalgo}
                    \Function{ErosionDistance}{$\mathbb{V}$, $\mathbb{W}$} 

                    \State \( P_\mathbb{V} \gets \) \textbf{InitProjectedSweepList}($\mathbb{V}$)
                    \State \( P_\mathbb{W} \gets \) \textbf{InitProjectedSweepList}($\mathbb{W}$)
                    
                    \State \( \mathrm{ans} \gets \) \( 0 \)

                    \For{each line $L$ in $[n]^{d-1}$ parallel to $(1, \ldots, 1)$}
                        \State Initialize a set $S_\mathbb{V}$ to be empty.
                        \State Initialize a set $S_\mathbb{W}$ to be empty.
                        
                        \For{$v \in L \cap \pi([n]^d)$} \Comment{Iterate in the lexicographic order on $\mathbb{Z}^{d-1}$}

                            \State $S_\mathbb{V}$.\textbf{Update}($P_\mathbb{V}$, $v$)
                            \State $S_\mathbb{W}$.\textbf{Update}($P_\mathbb{W}$, $v$)
    
                            \State  \( a \gets \) \textbf{ReconstructMin}(\( S_\mathbb{V} \), v)
                            \State \( b \gets \) \textbf{ReconstructMax}(\( S_\mathbb{V} \), v)
                            \State \( c \gets \) \textbf{ReconstructMin}(\( S_\mathbb{W} \), v)
                            \State \( d \gets \) \textbf{ReconstructMax}(\( S_\mathbb{W} \), v)
                            
                            \State \( x \gets \max (\textbf{Erosion1d}(a,\,b,\,c,\,d), \textbf{Erosion1d}(c,\,d,\,a,\,b) ) \)
                            
                            \State \( \mathrm{ans} \gets \) \( \max(\mathrm{ans},\,x) \)
                            
                        \EndFor
                    \EndFor

                    \State \Return \( \mathrm{ans} \)

                    \EndFunction

                    \State
                    \Function{InitProjectedSweepList}{$\mathbb{V}$}
                        \State Initialize $P^{\mathrm{start}}, P^{\mathrm{end}}$ to be hash maps with keys in $\pi([n]^d)$ and values being lists, by default empty.
                        \For{$x \in [n]^{d-1}$}
                            \State \( (a,\,b) \gets \) \textbf{SupportInterval}($\mathbb{V}$, $x$)
                            \State Append $x$ to $P^{\mathrm{start}}$[$\pi((x, a))$]
                            \State Append $x$ to $P^{\mathrm{end}}$[$\pi((x, b))$]
   
                        \EndFor
                        \State \Return $(P^{\mathrm{start}},\,P^{\mathrm{end}})$
                    \EndFunction
                    
                \end{algorithmic}

        This relies on a few auxillary functions, detailed below:
        \begin{itemize}
            \item \textbf{SupportInterval}($\mathbb{V}$, $x$). For an index $x \in [n]^{d-1}$ representing the first $(d-1)$ coordinates of a point in $[n]^d$, \textbf{SupportInterval}($\mathbb{V}$, $x$) returns an interval $(a, b)$ such that $(x, t) \in \mathbb{R}_\bbv^d$ for all $t \in (a, b)$. The interval is empty if $(v, x) \notin \mathbb{R}_\bbv^d$ for all $t$.
            
            \item The function $S$.\textbf{Update}($(P^{\mathrm{start}}, P^{\mathrm{end}})$, $v$) inserts into $S$ all $x$ satisfying $x \in P^{\mathrm{start}}[v]$, and removes all $x$ from $S$ satisfying $x \in P^{\mathrm{end}}[v]$.

            \item The function \textbf{ReconstructMin}$(S, v)$. For $S, v$ first find the minimum $x$ in $S$ (according to the product order on $\mathbb{Z}^{d-1}$). Then, find the unique $t$ such that $(x, t)$ lies on the fiber of $v$. Return $t$. For this to be well-defined a few invariants need to be upheld, described in the analysis.

            \item The function \textbf{ReconstructMax}$(S, v)$ performs the analogous function as \textbf{ReconstructMin}$(S, v)$, but finds the max element instead of the min.

           \item The function \textbf{Erosion1d}($a,\,b,\,c,\,d$) returns the maximum erosion in the one dimensional support $[a,\,b]\setminus[c,\,d]$ as a subset of $\mathbb{R}$.

        \end{itemize}
            \paragraph*{Analysis.}
            Here we analyze the algorithm in \Cref{alg:dEalgo} and prove its correctness and runtime guarantees.
            
            \begin{theorem}
                For any two antichain-decomposable $[n]^d$-modules $\bbv, \bbw$, the algorithm in \Cref{alg:dEalgo} computes $d_E(\bbv, \bbw)$.
            \end{theorem}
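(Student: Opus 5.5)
The plan is to reduce, via the simplifications already established, the computation of $d_E$ to a family of one-dimensional problems along the diagonal lines of $[n]^d$, and then check that the sweep correctly reconstructs each such problem. By \Cref{dEcubic} and \Cref{dEprojection}, it suffices to show that the algorithm computes
\[
\max\Big(\sup_{u\in\mathbb{R}^d_{\bbv,\bbw}}\mathcal{E}_\bbv([u,u]),\ \sup_{u\in\mathbb{R}^d_{\bbw,\bbv}}\mathcal{E}_\bbw([u,u])\Big).
\]
Indeed $\mathcal{E}^{\mathrm{small}}_\bbv(u)=\mathcal{E}^{\mathrm{large}}_\bbv(u)=\mathcal{E}_\bbv([u,u])$, so both suprema in $\mathcal{E}^\ast$ collapse to a single supremum over points.

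The key observation is that $[u,u]^{+\varepsilon}=[u^{-\varepsilon},u^{+\varepsilon}]$ stays on the diagonal line through $u$, that is, on the fiber $\pi^{-1}(\pi(u))$. Since $\supp\bbv$ is convex (\Cref{rem:acdproperties}), $\mathrm{rk}_\bbv([u^{-\varepsilon},u^{+\varepsilon}])=1$ exactly when both endpoints lie in $\supp\bbv$. The intersection of $\supp\bbv$ with a fiber is convex in a totally ordered set, hence an interval $[a,b]$ in the fiber parameter. So $\mathcal{E}_\bbv([u,u])$ is the one-dimensional quantity $\min(t-a,b-t)+1$ (or the appropriate discrete convention) at the fiber coordinate $t$ of $u$. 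Consequently the quantity displayed above equals the maximum over fibers $v$ of the one-dimensional erosion of $[a_v,b_v]\setminus[c_v,d_v]$ and symmetrically. This is precisely what \textbf{Erosion1d} returns, given that the restricted $\bbw$ support on the fiber is $[c_v,d_v]$. The outer loop visits every fiber, since every $v\in\pi([n]^d)$ lies on exactly one line parallel to $(1,\dots,1)$ in the base.

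The remaining, and main, step is to prove that after \textbf{Update} at $v$, \textbf{ReconstructMin}/\textbf{ReconstructMax} return exactly the endpoints $a_v,b_v$ of $\supp\bbv\cap\pi^{-1}(v)$. I would formulate this as a loop invariant: after processing $v$ on a line $L$, the set $S_\bbv$ consists of those $x\in[n]^{d-1}$ whose vertical support interval $(x,[\alpha_x,\beta_x])$ meets the fiber of $v$. Moving along $L$ in lexicographic order shifts the fiber by $(1,\dots,1)$, which changes the height where a column $x$ is hit monotonically. Hence each column enters once (at $\pi(x,\alpha_x)$) and leaves once (at $\pi(x,\beta_x)$), matching the start/end lists. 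One must take care with the off-by-one at the removal time, removing only after the evaluation at $v$, or keying ends at $\pi(x,\beta_x)+\mathbf 1$.

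Next I would show, using convexity of $\supp\bbv$, that the columns in $S_\bbv$ form a product-order chain along the fiber. In fact they are the points $x$ with $(x,t_x)$ on the fiber inside the support, which form a convex subset of a chain. Thus the product-order min and max exist and correspond to $a_v$ and $b_v$. This is the delicate point, because a general subset of $\mathbb{Z}^{d-1}$ has no minimum, and convexity together with the diagonal geometry must be used to exclude incomparable entries. Once the invariant holds for $\bbv$ and $\bbw$, correctness follows by combining the per-fiber formula with \Cref{dEsymchar}, and the $\max$ over the two \textbf{Erosion1d} calls accounts for both $\mathcal{E}^\ast(\bbv,\bbw)$ and $\mathcal{E}^\ast(\bbw,\bbv)$.
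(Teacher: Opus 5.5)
Your proposal is correct and follows the same route as the paper. You reduce via \Cref{dEcubic} and \Cref{dEprojection} to per-fiber one-dimensional erosions. You then show that $S_{\bbv}$ is exactly the set of active columns for the current fiber, and that these columns form a chain because any two columns meeting a common fiber differ by a multiple of $(1,\ldots,1)$.

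You are more explicit than the paper on two points: why the per-fiber reduction is valid (namely that $[u,u]^{+\varepsilon}$ stays on the fiber of $u$), and the off-by-one at removal time. Two small corrections. First, the chain property needs no convexity; the diagonal geometry alone gives it. Second, $\mathrm{rk}_{\bbv}=1$ for comparable support points follows from the ACD structure, since comparable points lie in a common interval summand, and not from convexity of the support alone.
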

            \begin{proof}
                Note that for a thin module with convex support (in particular an ACD module), the slice of the support of $\bbv$ and $\bbw$ along a fiber $v$ is either empty or a single interval. By the reduction of the erosion distance outlined earlier, it is sufficient to prove that for each fiber $v$, the intersection of $\mathbb{R}_\bbv^d$ and $\mathbb{R}_\bbw^d$ with the fiber $v$ is computed: in the pseudocode this is $a, b, c, d$.

                Suppose we are considering a particular fiber $v$ within the innermost loop. For a point $x \in [n]^{d-1}$, let $(a_x, b_x) := \textbf{SupportInterval}(\bbv, x)$. We say $x$ is \emph{active} for fiber $v$ if the fiber $v$ intersects the segment $[(x, a_x), (x, b_x)]$.

                We claim that $S_\bbv$ is exactly the set of active points $x \in [n]^{d-1}$ for fiber $v$. For any point $x \in [n]^{d-1}$, define $A_x$ to be the set of all $v$ such that $v$ intersects $[(x, a_x), (x, b_x)]$ (i.e. the set of all $v$ where $x$ \emph{should} be active). Note that by linearity of $\pi$, we have $A_x = [\pi(x, a_x), \pi(x, b_x)]$. Because $\pi$ is order-preserving in the last coordinate when we fix the first $(d-1)$ coordinates (i.e. fix $x$), this is a well-defined segment with $\pi((x, a_x)) \leq \pi((x, b_x))$.
                
                Now, observe that $A_x$ for any fixed $x$ is a line segment parallel to $(1, \ldots, 1)$ in $[n]^{d-1}$: so by our iteration order, once a point $x$ becomes active, it remains active until the sweep-line iteration $v$ becomes equal to $\pi(x, b_x)$, which is correctly reflected in our updates to $S_\bbv$. So for any fiber $v$, $S_\bbv$ correctly maintains the set of all active points. 

                Finally, to see that \textbf{ReconstructMin} and \textbf{ReconstructMax} are well-defined, we need to show that elements of $S_\bbv$ are always comparable. Suppose $x, y \in S_\bbv$. Then by construction, there must be $t_x, t_y$ such that $(x, t_x)$ and $(y, t_y)$ both lie on the fiber $v$. Then $\pi(x, t_x) = \pi(y, t_y) = v$. Expanding $\pi$, this requires that $x$ and $y$ differ strictly by a scalar multiple of the vector $(1, \ldots, 1)$. Consequently, the active set $S_\bbv$ strictly forms a totally ordered chain under the product order, and so taking the $\min$ and $\max$ is well-defined.

                Thus, the intersection of $\mathbb{R}_\bbv^d$ and $\mathbb{R}_\bbw^d$ with the fiber $v$ is computed correctly, establishing the correctness of this algorithm.
            \end{proof}

            \begin{theorem}
                For any two antichain-decomposable $[n]^d$-modules $\bbv, \bbw$, the algorithm in \Cref{alg:dEalgo} runs in $O(n^{d-1} d \log{n})$ time.
            \end{theorem}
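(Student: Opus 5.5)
The plan is to charge every operation of \Cref{alg:dEalgo} to one of three phases and bound each phase separately: building the sweep lists, performing the set updates, and processing the fibers. For each set $S_\bbv$ and $S_\bbw$ I will use a balanced binary search tree. Since every active set is a chain under the product order (shown in the correctness proof), I will key the tree by the first coordinate $x_1$, or equivalently by any fixed linear functional such as $x_1+\cdots+x_{d-1}$. On a chain this functional is strictly monotone, so the tree order agrees with the product order. A comparison of two keys costs $O(d)$ if we compare full vectors, and insertion, deletion, minimum and maximum each cost $O(d\log n)$.

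\emph{Preprocessing.} \textbf{InitProjectedSweepList} loops over the $n^{d-1}$ points $x\in[n]^{d-1}$. For each point it does one constant-time lookup of \textbf{SupportInterval}, since the support is stored as the compressed $O(n^{d-1})$ table of smallest and largest last coordinates. It then computes two projections $\pi$, each in $O(d)$, and makes two hash-map appends. This phase costs $O(n^{d-1}d)$ in total. Its main consequence is that the lists $P^{\mathrm{start}}$ and $P^{\mathrm{end}}$ together contain exactly $2n^{d-1}$ entries.

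\emph{Sweep.} Each $x$ appears in exactly one list of $P^{\mathrm{start}}$ and one of $P^{\mathrm{end}}$. The points $\pi(x,a_x)$ and $\pi(x,b_x)$ lie on the same line $L$ parallel to $(1,\ldots,1)$, because $A_x$ is such a segment. Hence over the whole run each $x$ is inserted once and deleted once, giving $O(n^{d-1})$ tree operations per module and $O(n^{d-1}d\log n)$ total. (Empty support intervals are simply skipped.)

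\emph{Fiber processing.} The fibers visited are the points of $\pi([n]^d)$, which is contained in $\{-n,\ldots,n\}^{d-1}$. At each fiber the algorithm does four \textbf{Reconstruct} calls, each an $O(d\log n)$ tree min or max query plus $O(d)$ arithmetic to solve $\pi(x,t)=v$ for $t$. It also does two \textbf{Erosion1d} calls, each $O(1)$: on a 1D interval the erosion of $[a,b]\setminus[c,d]$ is determined by the endpoints, using \Cref{dEprojection} and \Cref{dEcubic} in one dimension.

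\emph{Main obstacle.} The expected difficulty is counting the fibers. A naive bound gives $(2n)^{d-1}$, which carries a $2^{d-1}$ factor. To avoid this I would restrict the inner loop to fibers where some update occurs or where $S_\bbv\cup S_\bbw$ is nonempty, since empty fibers contribute $0$. The argument has two steps. First, between consecutive events the active sets are constant. Second, along a line $L$ the reconstructed endpoints shift by exactly one per step, so the 1D erosion values change monotonically between events, and the maximum is attained at an event or an adjacent fiber. That leaves $O(n^{d-1})$ relevant fibers, and each costs $O(d\log n)$, for a total of $O(n^{d-1}d\log n)$. If this refinement cannot be made rigorous, I would fall back to the stated bound with the $2^{d-1}$ absorbed into the constant for fixed $d$. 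Finally, specializing to $\bdyn$ (with $d=3$ and grid sides $S,T,T$, bounded by $n=S+T$) gives $O((S+T)^2\log(S+T))$.
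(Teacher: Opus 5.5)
Your accounting of the preprocessing, the balanced trees and \textbf{Erosion1d} matches the paper's proof. Your amortization of \textbf{Update} is more accurate than the paper's: each $x$ is inserted and deleted exactly once, during the sweep of the unique line containing $\pi(x,a_x)$ and $\pi(x,b_x)$. The paper instead claims \textbf{Update} costs $O(d\log|S|)$ per fiber, which is not literally true because many events can fall on one fiber. You diverge from the paper only on the fiber count. The paper simply asserts $|\pi([n]^d)|=O(n^{d-1})$ and charges $O(1)$ per fiber for reconstruction and the 1D erosion.

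Your obstacle there comes from the crude containment $\pi([n]^d)\subseteq\{-n,\dots,n\}^{d-1}$. Since $\pi(x)=\pi(y)$ exactly when $x-y\in\mathbb{Z}(1,\dots,1)$, fibers correspond to diagonal lines meeting $[n]^d$. Each such line has a unique lowest point in $[n]^d$, and that point has some coordinate equal to $1$. Hence $|\pi([n]^d)|=n^d-(n-1)^d\le d\,n^{d-1}$: there is no $2^{d-1}$ factor, and the paper's $O(n^{d-1})$ hides at most a factor $d$.

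To analyze the loop as written, you then only need $O(\log n)$ non-update work per fiber, rather than your $O(d\log n)$. Your own keying by $x_1$ gives this. Min and max queries cost $O(\log n)$, or $O(1)$ with maintained pointers. \textbf{ReconstructMin} and \textbf{ReconstructMax} need only $t=v_1+x_1$. You should also store the event lists per line, indexed by the scalar $v_1$ rather than hashed $(d-1)$-vectors, so that advancing to the next fiber costs $O(1)$. The fiber work then totals $O(d\,n^{d-1}\log n)$, matching the amortized update cost.

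Your event-skipping refinement is also sound. \textbf{Erosion1d} depends only on the relative position of $a,b,c,d$, so between consecutive events its value is actually constant, not merely monotone. However, it bounds a modified procedure that jumps between event fibers and must first sort the events on each line. That is not the algorithm the theorem refers to. Separately, restricting to fibers with $S_{\bbv}\cup S_{\bbw}\neq\emptyset$ gains nothing, since a module with full support makes every fiber nonempty. Your fallback of treating $d$ as fixed is at exactly the level of rigor of the paper's own proof.
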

            \begin{proof}
                We will briefly analyze the runtime of both the initialization and sweep-line phase. The representation of $\bbv, \bbw$ is assumed to be such that \textbf{SupportInterval} is an $O(1)$ query. This can be accomplished by a $(d-1)$ dimensional array of tuples, such that for each $x \in [n]^{d-1}$ the tuples represents the range of $t$ such that $(x, t) \in \mathbb{R}_{\bbv}^d$.
                
                Under these assumptions, \textbf{InitProjectedSweepList} is an $O(n^{d-1})$ computation. The $P^{\mathrm{start}}, P^{\mathrm{end}}$ maps produced contains a list of sweep-line events: there are $O(n^{d-1})$ total events across all keys. 

                The sweep-line iteration processes $|\pi([n]^d)| = O(n^{d-1})$ total fibers. For each fiber, the operation S.\textbf{Update} can be accomplished in $O(d \log{|S|})$ time if $S$ is represented by a balanced BST ordered with the lexicographic order (note that as the elements of $S_\bbv, S_\bbw$ are always pairwise comparable in the product order, the lexicographic order gives us a total order that is ``consistent'' with the product order). As $|S_\bbv|$ and $|S_\bbw|$ are both $O(n)$, this operation is $O(d \log{n})$. The operations \textbf{ReconstructMin}, \textbf{ReconstructMax} and \textbf{Erosion1d} are all constant time, so the sweep-line iteration is $O(n^{d-1} \cdot d \log{n})$.

                And so the algorithm in \Cref{alg:dEalgo} runs in $O(n^{d-1} d \log{n})$ time.
            \end{proof}

           \paragraph{The proof of \Cref{dEquadratic}.} By combining the previous two theorems, \Cref{dEquadratic} is proven. Indeed, note that as the rank invariant of $\bdyn$-indexed modules is indexed by $\mathbb{R}_{\times}^6$ (which is $\mathbb{R}^6$ with a different ordering), the algorithm in \Cref{alg:dEalgo} can be applied to a $\bdyn$-indexed ACD module via a suitable modification of the projection function $\pi$. In particular, the following will work: \[ \pi_{\bdyn}(\mathbf{a}) = (\mathbf{a}_1 + \mathbf{a}_2, \mathbf{a}_1 + \mathbf{a}_3, \mathbf{a}_1 + \mathbf{a}_4, \mathbf{a}_1 - \mathbf{a}_5, \mathbf{a}_1 - \mathbf{a}_6) \qquad \forall \mathbf{a} \in \mathbb{R}_{\times}^6. \]
           Note that as the rank invariant is defined to be $0$ at nontrivially nonadmissible points, and $+\infty$ at trivially admissible points (see \Cref{def:bdynrankinvariant}), such points do not contribute to the supremum in the erosion distance definition.

            \begin{remark}
                An algorithm to compute an interleaving distance between rank invariants (equivalent to the erosion distance) is presented in \cite[Theorem 5.4]{spatiotemporal}. For persistence modules over the $d$-dimensional poset $[n]^d$, the rank invariant is $2d$-dimensional, and the algorithm \cite[Theorem 5.4]{spatiotemporal} for computing their erosion distance attains an expected runtime of $O(n^{2d} \log{n})$. Moreover, by \cite[Section 4]{searchcomplexity} any search-based algorithm to compute the erosion distance between such modules has expected cost at least $O(n^{2d} \log{n})$.

                Compare this to \Cref{dEquadratic}, which for convex thin persistence modules over the same poset $[n]^d$, can compute their erosion distance in $O(n^{d-1}d\log n)$ time in the worst case. 
            \end{remark}
        \subsection{Extension of $d_E$ algorithm to non-thin modules} \label{sect:dealgoextension}
            Here, we provide an extension of the algorithm in \Cref{alg:dEalgo} to a more general class of persistence modules. These modules are not assumed to be thin or antichain-decomposable, but satisfy a generalized notion of convexity.
            \begin{definition}[Rank-maximal module] \label{def:rankmaximal}
                Fix a poset $\mathcal{P}$. We say a persistence module $\bbv: \mathcal{P} \rightarrow \bvec$ is \emph{rank-maximal} if for every $p, q \in \mathcal{P}$ with $p \leq q$, the following holds: 
                \[\mathrm{rank}\, \bbv(p \leq q) = \min{(\dim \bbv(p), \dim \bbv(q))}.\]
                Note that this implies that every structure map is either injective or surjective (or possibly both).
            \end{definition}
            To motivate the study of rank-maximal modules, we first recall the following.
            \begin{definition}[Simplicial filtration]
                A simplicial filtration over a poset $\mathcal{P}$ is a functor $K: \mathcal{P} \rightarrow \bsimp$ such that for any $p, q \in \mathcal{P}$ with $p \leq q$, we have $K(p) \subset K(q)$.
            \end{definition}
            Indeed, rank-maximal modules arise naturally from clustering information of a fixed set of points.
            \begin{proposition}
                 Fix a poset $\mathcal{P}$. Let $K: \mathcal{P} \rightarrow \bsimp$ be a simplicial filtration that does not add points: that is, for any $p \leq q$ the zero-skeletons of $K$ are equal, i.e. $K_0(p) = K_0(q)$.
                
                Define the module $\bbv: \mathcal{P} \rightarrow \bvec$ by $\bbv := H_0 \circ K$, where $H_0$ is the zeroth homology functor. Then $\bbv$ is a rank-maximal module.
            \end{proposition}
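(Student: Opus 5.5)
The plan is to identify $H_0$ of a simplicial complex with the free vector space on its connected components. Then each structure map becomes the linear map induced by a function between finite sets, and the rank condition reduces to elementary counting. Concretely, for $p \in \mathcal{P}$ let $\pi_0(p)$ denote the set of connected components of $K(p)$. Then $\bbv(p) \cong \mathbb{F}[\pi_0(p)]$, the vector space with basis $\pi_0(p)$. For $p \leq q$, the inclusion $K(p) \subset K(q)$ sends each component of $K(p)$ into a unique component of $K(q)$, which defines a function $f_{pq}: \pi_0(p) \to \pi_0(q)$. The map $\bbv(p \leq q)$ is the linear extension of $f_{pq}$ on basis vectors. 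This is the standard description of $H_0$ and needs only a brief justification: the class of a vertex depends only on its component, and the inclusion carries that vertex to itself.

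The key step is to show that $f_{pq}$ is surjective, and this is exactly where the hypothesis $K_0(p) = K_0(q)$ enters. Take any component $C$ of $K(q)$ and pick a vertex $v \in C$. Since $v \in K_0(q) = K_0(p)$, $v$ is a vertex of $K(p)$ and lies in some component $C'$ of $K(p)$. Because the inclusion sends $v$ to itself, $f_{pq}(C') = C$.

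Next I compute the rank. A linear map induced by a surjection of finite bases $f: A \to B$ has image spanned by $\{f(a)\}_{a \in A}$, which is all of $B$. So $\bbv(p \leq q)$ is surjective and its rank equals $|\pi_0(q)| = \dim \bbv(q)$. Surjectivity of $f_{pq}$ also gives $|\pi_0(q)| \leq |\pi_0(p)|$, that is, $\dim \bbv(q) \leq \dim \bbv(p)$. Hence
\[\mathrm{rank}\,\bbv(p \leq q) = \dim \bbv(q) = \min(\dim \bbv(p), \dim \bbv(q)),\]
which is the rank-maximality condition of \Cref{def:rankmaximal}.

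I expect no serious obstacle. The only point needing care is the identification of $H_0$ with the free vector space on components, compatibly with the maps induced by inclusions. I would state this as standard, giving a one-line justification via $0$-chains modulo boundaries of edges. One might also note that finiteness of $\pi_0$ is not really needed: surjectivity of the structure map gives rank $= \dim \bbv(q) \leq \dim \bbv(p)$ in any case, interpreted as cardinals.
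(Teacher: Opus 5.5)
Your proof is correct and complete. The paper states this proposition without proof, so there is no argument of its own to compare against. Your argument is the natural one.

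Because the vertex set is fixed, every component of $K(q)$ contains a vertex of $K(p)$. Hence the induced map $\pi_0(K(p)) \to \pi_0(K(q))$ is onto, and so $\bbv(p \leq q)$ is surjective. Surjectivity alone gives $\mathrm{rank}\,\bbv(p \leq q) = \dim \bbv(q) \leq \dim \bbv(p)$, which is the required equality.

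You have in fact proved something slightly stronger than rank-maximality: every structure map is surjective. So of the two options noted after the paper's definition (injective or surjective), only the surjective one ever occurs here.

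You correctly locate where the hypothesis is used, and it is genuinely needed. Take $K(p)$ to be two isolated vertices $a,b$, and $K(q)$ to add the edge $ab$ together with a new isolated vertex $c$. Then $\mathrm{rank}\,\bbv(p\leq q) = 1 < 2 = \min(\dim\bbv(p),\dim\bbv(q))$.

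The argument carries over verbatim to reduced $H_0$, since a surjection compatible with augmentations restricts to a surjection on their kernels. It also covers the application the paper has in mind: for $\delta \geq 0$ every singleton lies in $\mathcal{R}_\delta(\gamma_X(I))$, so the spatiotemporal Rips filtration never adds vertices.
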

            In particular, this shows that the $k=0$ spatiotemporal persistent homology \cite{spatiotemporal} \emph{without} curvature set simplification yields rank-maximal modules, and thus can benefit from the algorithm presented in this section.

            \begin{theorem}
                Suppose $\bbv, \bbw: [n]^d \rightarrow \bvec$ are rank-maximal modules. Let \[M = \max{\Big(\max_{p \in [n]^d} \dim \bbv(p), \max_{p \in [n]^d} \dim \bbw(p)\Big)}.\]
                Then $d_E(\mathrm{rk}_\bbv, \mathrm{rk}_\bbw)$ can be computed in $O(M \cdot n^{d-1} d \log{n})$ time.
            \end{theorem}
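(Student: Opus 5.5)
The plan is to reduce a rank-maximal module to $M$ thin modules with convex support, and then apply the machinery of \Cref{alg:dEalgo} to each one. For a rank-maximal $\bbv$ and each $j\in\{1,\dots,M\}$, set $U^\bbv_j:=\{p\in[n]^d : \dim\bbv(p)\geq j\}$. Rank-maximality gives, for $p\leq q$,
\[
\mathrm{rk}_\bbv([p,q])=\min(\dim\bbv(p),\dim\bbv(q))=\sum_{j=1}^M \mathbf{1}[p\in U^\bbv_j]\,\mathbf{1}[q\in U^\bbv_j].
\]
So $\mathrm{rk}_\bbv([p,q])\geq j$ if and only if both $p,q\in U^\bbv_j$. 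Each level set $U^\bbv_j$ is an arbitrary subset, not necessarily convex. The rank information is therefore that of the "indicator" $\mathbf{1}_{U_j}$, where we declare $\mathrm{rk}_j([p,q])=1$ if and only if $p,q\in U_j$, regardless of intermediate points.

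Next I rewrite $d_E$ levelwise. The inequality $\mathrm{rk}_\bbv(I^{+\varepsilon})\leq \mathrm{rk}_\bbw(I)$ fails exactly when there is some $j$ with $\mathrm{rk}_\bbv(I^{+\varepsilon})\geq j>\mathrm{rk}_\bbw(I)$. It suffices to test $j=\mathrm{rk}_\bbw(I)+1$, but it is cleaner to say: the condition holds for all $I$ if and only if, for every $j$, $\mathrm{rk}_\bbv(I^{+\varepsilon})\geq j \Rightarrow \mathrm{rk}_\bbw(I)\geq j$. Hence
\[
d_E(\mathrm{rk}_\bbv,\mathrm{rk}_\bbw)=\max_{1\leq j\leq M} d_E\big(\mathbf{1}_{U^\bbv_j},\mathbf{1}_{U^\bbw_j}\big),
\]
where each term on the right is the erosion distance between $\{0,1\}$-valued rank functions of the form above. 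Then I redo \Cref{dEepscasework}, \Cref{dEsymchar}, \Cref{dEcubic} and \Cref{dEprojection} for these $0/1$ rank functions. The case analysis is verbatim. The projection lemma still holds, because $\mathrm{rk}_j$ is order-reversing on $\mathbf{Seg}$: if $p\leq p'\leq q'\leq q$ and $p,q\in U_j$, then $p',q'\in U_j$ by monotonicity of $\dim$ along injective/surjective maps. This is the point where rank-maximality is used to recover convexity.

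Indeed, I expect a key observation to be that rank-maximality forces each $U_j$ to be convex. If $p\leq r\leq q$ with $p,q\in U_j$, then $j\leq \mathrm{rk}\,\bbv(p\leq q)\leq \mathrm{rk}\,\bbv(r\leq q)\leq\dim\bbv(r)$, so $r\in U_j$. Consequently $\mathbf{1}_{U_j}$ is the rank invariant of a thin module with convex support, namely $\mathbb{I}_{U_j}$ extended by identity maps. The remark after the framed assumption says \Cref{alg:dEalgo} applies to exactly such modules. So the algorithm is: compute the $M$ support-interval arrays (for each $x\in[n]^{d-1}$ and each $j$, the range of $t$ with $\dim\bbv(x,t)\geq j$), run \textsc{ErosionDistance} on each pair $(U^\bbv_j,U^\bbw_j)$, and return the maximum. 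Each run costs $O(n^{d-1}d\log n)$, giving $O(M\, n^{d-1}d\log n)$ in total.

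The main obstacle is justifying the levelwise maximum formula rigorously, specifically the identity $\mathrm{rk}_\bbv=\mathrm{rk}_{\oplus_j\mathbb{I}_{U_j}}$ and the equivalence of the thresholded conditions. This needs care because $\mathbb{I}_{U_j}$ may have several order-connected components. Different components $C,C'$ of $U_j$ could contain comparable points $p\leq q$, which would give $\mathrm{rk}=0$ in $\mathbb{I}_{U_j}$ but rank $\geq j$ in $\bbv$. I would resolve this through convexity: if $p\in C$, $q\in C'$ and $p\leq q$, then $p$ and $q$ are order-connected directly, so $C=C'$. Thus the thin module $\mathbb{I}_{U_j}$, with identity maps on its convex support, has rank exactly $\mathbf{1}[p,q\in U_j]$, which is what the algorithm needs. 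The remaining bookkeeping, namely storing $M$ support intervals per $x$ and running the sweeps, is routine.
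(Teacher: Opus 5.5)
Your proposal is correct and follows the same route as the paper. Both use the superlevel sets $U_j$, obtain convexity of each $U_j$ from rank-maximality via the factorization $\bbv(p\leq q)=\bbv(r\leq q)\circ\bbv(p\leq r)$, write $\mathrm{rk}_{\bbv}=\sum_{j}\mathrm{rk}_{\mathbb{I}_{U^{\bbv}_j}}$, reduce to $d_E(\mathrm{rk}_{\bbv},\mathrm{rk}_{\bbw})=\max_j d_E(\mathrm{rk}_{\mathbb{I}_{U^{\bbv}_j}},\mathrm{rk}_{\mathbb{I}_{U^{\bbw}_j}})$, and finish with $M$ calls to \Cref{alg:dEalgo}. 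Your threshold formulation ($\mathrm{rk}_{\bbv}(J)\geq j$ if and only if both endpoints of $J$ lie in $U^{\bbv}_j$) also makes explicit a step the paper only calls clear, namely that a feasible erosion for $(\mathrm{rk}_{\bbv},\mathrm{rk}_{\bbw})$ is feasible at every level; just delete your early remark that $U^{\bbv}_j$ is not necessarily convex, since you go on to prove that it is.
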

            \begin{proof}
                For $i \in \{0, \ldots, M\}$ define the superlevel set $\supp^{(i)} \bbv$ and superlevel module $\bbv^{(i)}$ by \[\supp^{(i)} \bbv := \{p \in [n]^d : \dim \bbv(p) \geq i \} \qquad \bbv^{(i)} := \mathbb{I}_{\supp^{(i)} \bbv} \] and similarly for $\bbw$. Note that by rank-maximality, $\bbv^{(i)}$ and $\bbw^{(i)}$ are now thin convex modules. The thinness is immediate, and to show convexity, suppose $p, q, r \in \supp^{(i)} \bbv$ with $p \leq r \leq q$. Then the map $\bbv(p \leq q)$ factors through $\bbv(r)$, and by rank-maximality we thus have \[i \leq \min{(\dim \bbv(p), \dim \bbv(q))} = \mathrm{rank}\, \bbv(p \leq q) \leq \mathrm{rank}\, \bbv(p \leq r) \leq \dim \bbv(r),\]
                and so $r \in \supp^{(i)} \bbv$.
                
                Additionally, for any $\varepsilon > 0$ define the erosion of the rank invariant $\mathrm{rk}_{\bbv}^{+\varepsilon}$ by $\mathrm{rk}_{\bbv}^{+\varepsilon}([p, q]) := \mathrm{rk}_\bbv([p^{-\varepsilon}, q^{+\varepsilon}])$ for any segment $[p, q]$ and similarly for $\mathrm{rk}_{\bbw}^{+\varepsilon}$. It is clear that $d_E(\mathrm{rk}_\bbv, \mathrm{rk}_\bbw)$ is the infimum of all $\varepsilon$ such that $\mathrm{rk}_{\bbv}^{+\varepsilon} \leq \mathrm{rk}_{\bbw}$ and $\mathrm{rk}_{\bbw}^{+\varepsilon} \leq \mathrm{rk}_{\bbv}$ (we will refer to any such $\varepsilon$ as a feasible erosion).

                Now, for any segment $I = [p, q]$ the following holds by rank-maximality: \[\mathrm{rk}_{\bbv}(I) = \min{(\dim \bbv(p), \dim \bbv(q))} = \sum_{i=1}^M \mathrm{rk}_{\bbv^{(i)}}(I),\]
                where the sum is taken pointwisely. And so we have \[\mathrm{rk}_{\bbv} = \sum_{i=1}^M \mathrm{rk}_{\bbv^{(i)}} \qquad \mathrm{rk}_{\bbv}^{+\varepsilon} = \sum_{i=1}^M \mathrm{rk}^{+\varepsilon}_{\bbv^{(i)}} \qquad \mathrm{rk}_{\bbw} = \sum_{i=1}^M \mathrm{rk}_{\bbw^{(i)}} \qquad \mathrm{rk}^{+\varepsilon}_{\bbw} = \sum_{i=1}^M \mathrm{rk}^{+\varepsilon}_{\bbw^{(i)}}.\]                
                Combining these properties, we can see that if $\varepsilon$ is a feasible erosion for the pairs $\mathrm{rk}_{\bbv^{(i)}}, \mathrm{rk}_{\bbw^{(i)}}$ for all $i$, then it must be a feasible erosion for the pair $\mathrm{rk}_\bbv, \mathrm{rk}_\bbw$ by the levelset additivity shown above. And clearly if $\varepsilon$ is a feasible erosion for $\mathrm{rk}_\bbv, \mathrm{rk}_\bbw$ then it must also be feasible for each individual $\mathrm{rk}_{\bbv^{(i)}}, \mathrm{rk}_{\bbw^{(i)}}$.

                Thus, we have shown that \[d_E(\mathrm{rk}_\bbv, \mathrm{rk}_\bbw) = \max_{i \in \{1, \ldots, M\}} \beta_i(\bbv, \bbw) \qquad \beta_i(\bbv, \bbw) := d_E(\mathrm{rk}_{\bbv^{(i)}}, \mathrm{rk}_{\bbw^{(i)} }).\]
                Each $\beta_i(\bbv, \bbw)$ can be computed in $O(n^{d-1} d \log{n})$ time by the algorithm in \Cref{alg:dEalgo}: in particular, note that we do not need to physically construct $\bbv^{(i)}$ and $\bbw^{(i)}$ in memory, as all we need is a representation of $\bbv, \bbw$ such that $\mathbf{SupportInterval}(\bbv^{(i)}, v)$ is an $O(1)$ operation for all $i, v$. We need to perform $M$ such calculations, and so this final complexity is $O(M \cdot n^{d-1} d \log{n})$, which is the desired result.
            \end{proof}
            
\section{Experimental Results} \label{sect:experimental}
    To validate the implementability and distinguishing power of our method, we analyze dynamic metric spaces produced by the Boids model \cite{boidsmodel}. A repository with code supporting this experiment is available at \cite{dyncurvrepo}, including both computation of distances and fixed-seed pseudorandom generation of boids.
    
    The particular variant of the Boids model utilized simulates a finite set of points moving on the flat $2$-torus $\mathbb{T}^2$, with the following parameters governing the behavior of each point:
    
    \begin{itemize}
        \item Cohesion: The tendency for a boid to be attracted to neighboring boids.
        \item Separation: The tendency for a boid to steer away from neighbors that are very close. 
        \item Alignment: The tendency for neighboring boids to match velocity vectors.
    \end{itemize}
    
    The parameters were varied to produce $5$ distinct behaviors, and $10$ randomly generated flocks were selected per behavior. Each flock was run for $t=600$ timesteps, resulting in a total of $50$ dynamic metric spaces.

    We then computed the persistence set invariant for each flock, as per \Cref{def:dyncurvpmod}. For each flock, a total of $15,000$ $(2k+2)$-size subsets for $k \in \{0, 1\}$ were selected and their spatiotemporal persistent homology modules were computed: this produces the approximate persistence set of the DMS. An approximate k-center algorithm (Gonzalez's algorithm) was used with the $d_E$ metric on the computed modules to select $2,000$ ``representative'' samples.
    
    To lower bound the distance $d_{\mathrm{dyn}}$ between two flocks, we computed instead the metric $d_H^{\mathrm{ero}}$ (\Cref{def:persistenceseterosion}) between the persistence sets associated with these two flocks, for each $k$. The computation of the Hausdorff distance involved in the definition of $d_H^{\mathrm{ero}}$ requires all pairwise erosion distances between modules to be calculated, but this is tractable by parallel computation.
    
    This procedure was conducted for the $k=0$ and $k=1$ degree homology, and the final distance matrix between flocks is the elementwise maximum of the $k=0$ and $k=1$ distance matrices.

    Using both Single Linkage Hierarchical Clustering (\Cref{fig:sub1}) and Classical Multidimensional Scaling (\Cref{fig:sub2}), this distance matrix was visualized. The results demonstrate good separation between behaviors (\Cref{fig:plotsboth}).

    \begin{figure}[bpth]
        \begin{subfigure}[b]{0.47\textwidth}
            \centering
            \includegraphics[width=\textwidth]{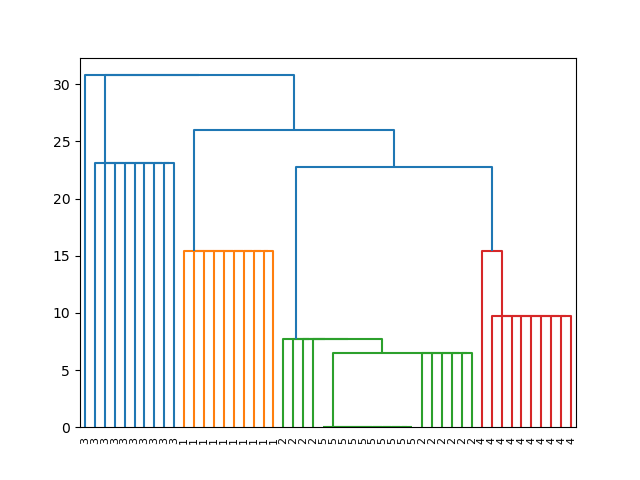}
            \caption{Single linkage dendrogram plot of distance matrix between flocks induced by $d_E$. Note that the distance between any pair of points for behavior $5$ is $0$.}
            \label{fig:sub1}
        \end{subfigure}
        \hfill
        \begin{subfigure}[b]{0.47\textwidth}
            \centering
            \includegraphics[width=\textwidth]{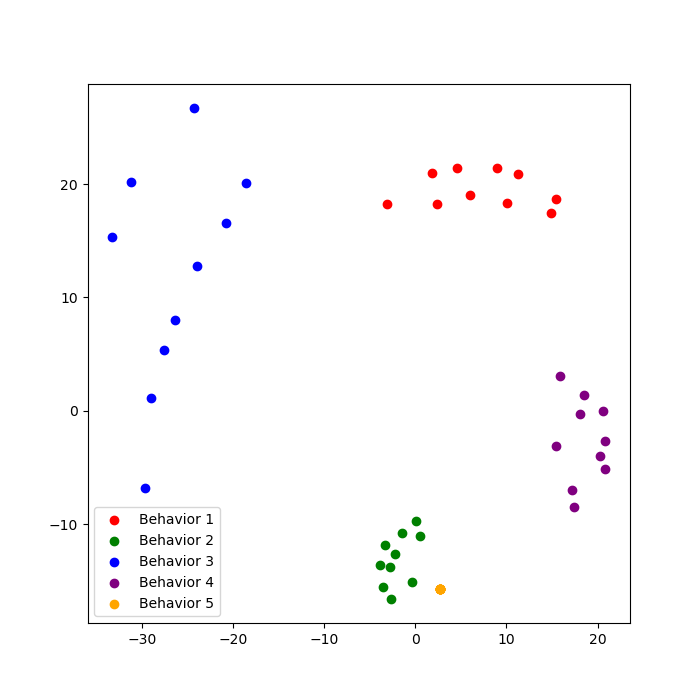}
            \caption{Classical MDS plot of distance matrix between flocks induced by $d_E$. Note that all $10$ points for behavior $5$ are coincident.}
            \label{fig:sub2}
        \end{subfigure}
        \caption{Visualization of distance matrix between flocks produced by $d_E$-induced distance}
        \label{fig:plotsboth}
    \end{figure}
    
    Additionally, to assess the quality of clustering, we computed the average error rate of the $1$-nearest neighbor classifier. For each behavior, we choose one random flock as a ``representative", and classify all remaining flocks based on the $1$-nearest neighbor classifier produced by these representatives. This procedure is repeated $10^5$ times, and the misclassification rate of each trial is averaged to yield a final $1$-nearest neighbors misclassification rate. As a baseline, we compare against the results of the PHoDMSs repository (\cite{phodmsrepo}), which implements the algorithms described in \cite{spatiotemporal}. The corresponding $1$-NN accuracies are:
    \begin{itemize}
        \item $\mathrm{NN}_1(d_E) = 98.57\%$
        \item $\mathrm{NN}_1(\mathrm{PHoDMSs}) = 72.23\%$
    \end{itemize}
    These experiments were run on an Intel(R) Xeon(R) Gold 6338 CPU machine with $32$ gigabytes of memory. The $d_E$ stage of the experiment took $63$ minutes wall time, and the PHoDMSs baseline stage took $31$ hours.

\section{Proof of Stability Inequality} \label{sec:stabilityproof}
    In this section we establish the stability of our construction, leveraging the result \cite[Theorem 4.1]{spatiotemporal}. For a dynamic metric space $\gamma_X = (X, d_X)$ and a subset $X' \subseteq X$, let $\gamma_{X'}$ denote the subspace $\gamma_{X'} = (X', d_X |_{X'})$.
    \begin{restatable}[]{lemma}{dynghsubsetrestate} \label{lem:dynghsubset}
        Suppose $\gamma_X = (X, d_X)$ and $\gamma_Y = (Y, d_Y)$ are two dynamic metric spaces, and let $\delta > 0$ be arbitrary. Then for all $X' \subseteq X$, there exists a $Y' \subseteq Y$ such that $|Y'| \leq |X'|$, and $d_{\text{dyn}}(\gamma_{X'}, \gamma_{Y'}) \leq d_{\text{dyn}}(\gamma_X, \gamma_Y) + \delta$
    \end{restatable}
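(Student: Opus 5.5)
We will work directly with the definition of $d_{\text{dyn}}$ from \cite{stablesignatures} (the $\lambda=2$ generalized Gromov--Hausdorff distance, \Cref{def:ddynmetric}). It is an infimum over tripods / correspondences $R \subseteq X \times Y$ of a distortion quantity $\mathrm{dis}^{\mathrm{dyn}}_\lambda(R)$. The relevant feature of this quantity is that it is built from pairs $(x,y),(x',y') \in R$ through the conditions that, for every time $t$, $d_X(t)(x,x')$ is controlled by $d_Y$ over a $\varepsilon$-widened time window around $t$, and symmetrically. The point is that this distortion is a supremum over pairs of elements of $R$, so it can only decrease when $R$ is replaced by a subset.

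\textbf{Step 1.} Fix $\delta>0$ and pick a correspondence $R\subseteq X\times Y$ whose dynamic distortion satisfies $\tfrac12\mathrm{dis}(R) \le d_{\text{dyn}}(\gamma_X,\gamma_Y)+\delta$, with whatever normalization the definition uses. Such an $R$ exists by the definition of the infimum.

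\textbf{Step 2.} Given $X'\subseteq X$, choose for each $x\in X'$ one element $y_x\in Y$ with $(x,y_x)\in R$, which is possible since $R$ is a correspondence. Set $Y' := \{y_x : x\in X'\}$, so that $|Y'|\le |X'|$. Then define $R' := \{(x,y_x): x\in X'\}\subseteq X'\times Y'$. This $R'$ is a correspondence between $X'$ and $Y'$: every $x \in X'$ is covered by construction, and every element of $Y'$ is of the form $y_x$ for some $x \in X'$.

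\textbf{Step 3.} Since $R'\subseteq R$ and the subspaces carry the restricted dynamic metrics $d_X|_{X'}$ and $d_Y|_{Y'}$, every quantity entering $\mathrm{dis}(R')$ already appears among those entering $\mathrm{dis}(R)$. Hence $\mathrm{dis}(R')\le \mathrm{dis}(R)$, and therefore $d_{\text{dyn}}(\gamma_{X'},\gamma_{Y'})\le d_{\text{dyn}}(\gamma_X,\gamma_Y)+\delta$.

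The main obstacle is Step 3: confirming that the $\lambda$-slack distortion really is monotone under taking sub-correspondences. This requires checking that no term in the definition quantifies over all of $X$ or $Y$, for instance an "$\exists$" over points outside $R'$. If the definition instead uses tripods $X\xleftarrow{\phi}Z\xrightarrow{\psi}Y$, the same argument applies: take $Z' \subseteq \phi^{-1}(X')$ consisting of one chosen preimage per $x \in X'$, and set $Y'=\psi(Z')$. The distortion is then again a supremum over pairs in $Z'\subseteq Z$, so it cannot increase.
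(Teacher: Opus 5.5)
Your proposal is correct and is essentially the paper's argument: the paper likewise fixes a tripod $X \twoheadleftarrow Z \twoheadrightarrow Y$ with distortion below $d_{\mathrm{dyn}}(\gamma_X,\gamma_Y)+\delta$, picks one preimage $z_x$ for each $x\in X'$, sets $Y'=\varphi_Y(Z')$, and notes that any time $t$ and pair $z,z'\in Z'$ witnessing failure of the $(2,\varepsilon)$-tripod condition for the restricted tripod would witness the same failure for the original tripod. Your Step 3 concern is settled by the appendix definition: the condition only involves $\min_{t'\in[t]^{+\varepsilon}} d_X(t')$ at the fixed pair $(\varphi_X(z),\varphi_X(z'))$, and symmetrically for $Y$, so nothing quantifies over points outside $Z'$ and the distortion cannot increase under restriction.
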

    For a proof of this lemma, see \Cref{app:dynghsubsetproof}. We can now show:
    \stabrestate*
    \begin{proof}
        For notational convenience, define $P = D_k^n(\gamma_X)$ and $Q = D_k^n(\gamma_Y)$.
        
        Let $\delta > 0$ be arbitrary, and $\bbv \in P$. By \Cref{def:dyncurvpmod}, there is a corresponding $S \subseteq X$ and $\gamma_S \in K_n(\gamma_X)$ such that $\bbv = H_k(\mathcal{R}^{\text{lev}}(\gamma_S))$. By \Cref{lem:dynghsubset}, we can choose a $T \subseteq Y$ and a $\gamma_T \in K_n(\gamma_Y)$ such that
        \[d_{\text{dyn}}(\gamma_S, \gamma_T) \leq d_{\text{dyn}}(\gamma_X, \gamma_Y) + \delta.\]
        Define $\bbw := H_k(\mathcal{R}^{\text{lev}}(\gamma_T))$, and by \Cref{def:dyncurvpmod} it follows that $\bbw \in Q$. By \cite[Theorem 4.1]{spatiotemporal}, we have the inequality $d_I^{\bvec}(\bbv, \bbw) \leq 2 \cdot d_{\text{dyn}}(\gamma_S, \gamma_T)$,
        and thus for any $\bbv \in P$ we have found a corresponding $\bbw \in Q$ such that
        \[d_I^{\bvec}(\bbv, \bbw) \leq 2 \cdot d_{\text{dyn}}(\gamma_S, \gamma_T) \leq 2 \cdot d_{\text{dyn}}(\gamma_X, \gamma_Y) + 2 \delta.\]
        As $\bbv$ and $\delta > 0$ are arbitrary, this yields 
        \[\max_{\bbv \in P} \min_{\bbw \in Q} d_I^{\bvec}(\bbv, \bbw) \leq 2 d_{\text{dyn}}(\gamma_X, \gamma_Y).\]
        By interchanging $\gamma_X$ and $\gamma_Y$, we have the symmetric statement
        \[\max_{\bbv \in Q} \min_{\bbw \in P} d_I^{\bvec}(\bbv, \bbw) \leq 2 d_{\text{dyn}}(\gamma_X, \gamma_Y),\]
        which proves that $d_H\left(D_k^n(\gamma_X), D_k^n(\gamma_Y)\right) \leq 2 \cdot d_{\text{dyn}}(\gamma_X, \gamma_Y)$.
    \end{proof}

\section{Conclusion}
    We introduce a construction for analysis of dynamic data that contends with the computational challenges arising from multidimensional persistence by working with many sufficiently small subsets of the original data. This produces interval-decomposable modules that admit an efficient algorithm for computing metrics such as the erosion distance, which can be applied to arbitrary convex thin modules, independent of our construction.

    There are several avenues to improve upon these results. In particular, an interesting question regarding the modules we construct is the existence of a ``synthetic'' definition of such modules. We already have a necessary condition for a module $\bbv$ to be the result of our construction: it must be antichain-decomposable. But this is not sufficient: in a dynamic metric space $(X, d_X)$, the distance function between any two points $x, y \in X$ through time must be continuous, which imposes additional requirements on the structure of such modules. Additionally, combinatorial properties of the Rips filtration will impose further constraints. Under what conditions can a $\bdyn$-indexed persistence module be realized by taking the Rips spatiotemporal persistent homology of a $(2k+2)$-DMS?

    Additionally, the practical handling of such modules has room to be strengthened. Currently, the support of all modules is stored as a potentially dense matrix. Is there a way to efficiently ``sketch'' the support, i.e. sparsify the support or approximate it in terms of simpler primitives? This will allow for significantly more efficient storage and computation. Furthermore, the Hausdorff distance between persistence sets is currently calculated by computing the pairwise erosion distance between all modules. More elaborate data structures may significantly reduce the number of computations, leading to practical performance gains.

    Finally, it is natural to ask whether our framework can be leveraged to detect and characterize motifs in dynamic metric spaces, i.e. recurrent patterns or behaviors exhibited by many sub-DMSs of a larger DMS. In particular, interpretation of $(2k+2)$ persistence sets from a perspective of motifs, and efficient storage and clustering of such persistence set elements, is a subject of future work. This is analogous to motif discovery in graphs and networks, introduced in \cite{networkmotifs}.

\renewcommand\refname{Bibliography}
\bibliography{main.bib}

\begin{appendices}
    \section{The $\lambda$-Slack Interleaving Distance} \label{app:lambdaslackdistance}
        We present the $\lambda$-slack interleaving distance, a generalization of the Gromov-Hausdorff distance to the dynamic setting, introduced in \cite{stablesignatures}. These are a family of metrics on the space of dynamic metric spaces (modulo strong isomorphism), parameterized by a number $\lambda \in [0, \infty)$.
    
        \begin{definition}[Tripods \cite{stablesignatures}]
            For two sets $X$ and $Y$, a tripod is a set $Z$ and two surjective maps $\varphi_X: Z \rightarrow X$ and $\varphi_Y: Z \rightarrow Y$. We denote a tripod $R$ by $X \overset{\varphi_X}{\longtwoheadleftarrow} Z \overset{\varphi_Y}{\longtwoheadrightarrow} Y$
        \end{definition}
    
        \begin{definition}[Comparison of functions by tripods \cite{stablesignatures}]
            Let $X$ and $Y$ be any two sets, with maps $d_1: X \times X \rightarrow \mathbb{R}_{+}$ and $d_2: Y \times Y \rightarrow \mathbb{R}_{+}$. Given a tripod $R: X \overset{\varphi_X}{\longtwoheadleftarrow} Z \overset{\varphi_Y}{\longtwoheadrightarrow} Y$, we say $d_1 \leq_{R} d_2$ if and only if \[\varphi_X^{*} d_1(z, z') \leq \varphi_Y^{*} d_2(z, z') \qquad \forall z, z' \in Z\]. Here, $\varphi_X^{*} d_1(z, z')$ denotes the pullback of $d_1$, i.e. $\varphi_X^{*} d_1(z, z') = d_1(\varphi_X(z), \varphi_X(z'))$
        \end{definition}
        For any $t \in \mathbb{R}$, $\varepsilon > 0$ we let $[t]^{+\varepsilon} := [t - \varepsilon, t + \varepsilon]$. Additionally, we will define
        \begin{definition}[\cite{stablesignatures}]
            Let $X$ be a set and $d_X: X \times X \rightarrow \mathbb{R}_+$. Then for any $t \in \mathbb{R}$ and $\varepsilon > 0$, define $\bigvee_{[t]^{+\varepsilon}} d_X$ by \[\left(\bigvee_{[t]^{+\varepsilon}} d_X\right)(x, x') := \min_{t' \in [t]^{+\varepsilon}} d_X(t')(x, x') \qquad \forall x, x' \in X.\]
        \end{definition}
        With this definition, we can now introduce the $\lambda$-slack distortion, which is used to construct the $\lambda$-slack interleaving distance.
        \begin{definition}[$\lambda$-distortion of a tripod \cite{stablesignatures}]
            Suppose $\gamma_X = (X, d_X)$ and $\gamma_Y = (Y, d_Y)$ are two dynamic metric spaces. Let $R: X \overset{\varphi_X}{\longtwoheadleftarrow} Z \overset{\varphi_Y}{\longtwoheadrightarrow} Y$ be a tripod. We say $R$ is a $(\lambda, \varepsilon)$-tripod if
            \begin{center}
                For all $t \in \mathbb{R}$, $\bigvee_{[t]^{+\varepsilon}} d_X \leq_{R} d_Y(t) + \lambda \cdot \varepsilon$ and $\bigvee_{[t]^{+\varepsilon}} d_Y \leq_{R} d_X(t) + \lambda \cdot \varepsilon$
            \end{center}
            The $\lambda$-distortion $\mathrm{dis}_\lambda^{\mathrm{dyn}}(R)$ is the infimum of all $\varepsilon > 0$ such that $R$ is a $(\lambda, \varepsilon)$-tripod.
        \end{definition}
    
        \begin{definition}[$\lambda$-slack interleaving distance between DMSs \cite{stablesignatures}] \label{def:slackinterleaving}
            For any $\lambda \geq 0$ and dynamic metric spaces $\gamma_X, \gamma_Y$, define the $\lambda$-slack interleaving distance as
            \[d_{\mathrm{dyn}, \lambda}(\gamma_X, \gamma_Y) = \min_{R} \text{dis}_\lambda^{\text{dyn}}(R)\]
            Where we minimize over all tripods $R: X \overset{\varphi_X}{\longtwoheadleftarrow} Z \overset{\varphi_Y}{\longtwoheadrightarrow} Y$.
        \end{definition}

        For brevity, we make the following definition:
        
        \begin{definition}[$d_{\mathrm{dyn}}$ between DMSs \cite{stablesignatures}]\label{def:ddynmetric}
            We will refer to the $\lambda=2$ slack interleaving distance $d_{\mathrm{dyn}, 2}$ as $d_{\mathrm{dyn}}$.
        \end{definition}
    
    \section{Proof of \Cref{lem:dynghsubset}} \label{app:dynghsubsetproof}
        We will show that:
        \dynghsubsetrestate*
        \begin{proof}
            For notational convenience, let $\Delta = d_{\text{dyn}}(\gamma_X, \gamma_Y)$.
            
            By \Cref{def:slackinterleaving}, there is a dynamic metric space $(Z, d_Z)$ and a surjective tripod $R$ given by $X \overset{\varphi_X}{\longtwoheadleftarrow} Z \overset{\varphi_Y}{\longtwoheadrightarrow} Y$, such that $\text{dis}^{\text{dyn}}(R) < \Delta + \delta$.
            
            By surjectivity of $R$, for each $x \in X'$ there is a corresponding $z_x$ such that $\varphi_X(z_x) = x$. 
            
            Let $Z' = \{z_x : x \in X' \}$, and define $Y' = \varphi_Y(Z')$.
            
            This yields a surjective tripod $R'$, denoted by $X' \overset{\varphi_X |_{Z'}}{\longtwoheadleftarrow} Z' \overset{\varphi_Y |_{Z'}}{\longtwoheadrightarrow} Y'$. Consider the dynamic metric spaces $\gamma_{X'}, \gamma_{Y'}, \gamma_{Z'}$. We claim that $\text{dis}^{\text{dyn}}(R) < \Delta + \delta$. Suppose not: then by \Cref{def:slackinterleaving}, $R'$ is not an $\varepsilon$-tripod for any $\varepsilon < \Delta + \delta$. So there must exist some $t \in \mathbb{R}$ and a pair $z, z' \in Z'$ such that one of the following holds:
            \begin{align*}
                \left(\varphi_{X'}^{*} \, d_{X'}([t]^{+\varepsilon})\right)(z, z') &> \left(\varphi_{Y'}^{*} \, d_{Y'}([t]^{+\varepsilon})\right)(z, z') + 2 \varepsilon \text{, or}\\
                \left(\varphi_{Y'}^{*} \, d_{Y'}([t]^{+\varepsilon})\right)(z, z') &> \left(\varphi_{X'}^{*} \, d_{X'}([t]^{+\varepsilon})\right)(z, z') + 2 \varepsilon.
            \end{align*}
            But because $Z' \subseteq Z$, the same $t$ and $z, z' \in Z$ shows that $R$ is not an $\varepsilon$-tripod. But this contradicts the fact that $\text{dis}^{\text{dyn}}(R) < \Delta + \delta$.
    
            Therefore, we have a tripod $R'$ such that $\text{dis}^{\text{dyn}}(R') < \Delta + \delta$, and thus our chosen $Y'$ satisfies $d_{\text{dyn}}(\gamma_{X'}, \gamma_{Y'}) \leq d_{\text{dyn}}(\gamma_X, \gamma_Y) + \delta$.
        \end{proof}

    \section{Statement of necessary results from \cite{curvaturesets}} \label{app:curvaturesetstheorem}
        Although these results are originally stated for metric spaces, they also hold in the semimetric setting. Indeed, curvature sets can be defined on an even more general setting of networks, see \cite{chowdhury2023distances}.
        \begin{theorem}[{\cite[Theorem 4.4(B)]{curvaturesets}}]
            Let $(X, d_X)$ be a metric space with $n$ points. Suppose that $n$ is even and $k \in \mathbb{Z}_+$ is such that $n = 2k+2$. Let $\mathrm{dgm}_k^{\mathrm{VR}}(X)$ denote the degree $k$ persistence diagram of the Rips filtration of $X$. 
            
            Then $\mathrm{dgm}_k^{\mathrm{VR}}(X)$ consists of a single point $(t_b(X), t_d(X))$ if and only if $t_b(X) < t_d(X)$ and is empty otherwise, where $t_b(X), t_d(X)$ are functions defined in \cite[Definition 4.1]{curvaturesets}.
        \end{theorem}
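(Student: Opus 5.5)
We need to identify which statement is last. It is the Theorem from Curvature sets [Theorem 4.4(B)] in the appendix, a cited result. We should therefore propose a proof of that: for a metric space $X$ with $n=2k+2$ points, the degree-$k$ Rips persistence diagram is either the single point $(t_b,t_d)$ (when $t_b<t_d$) or empty.

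We do not have the definitions of $t_b,t_d$, so we recall them: $t_b(X)=\min$ over perfect matchings (partitions of $X$ into $k+1$ pairs) of the maximum distance between points in different pairs, and $t_d(X)$ is the minimum over those pairs of the pair distance. Equivalently, one can phrase $t_b$ as the first scale at which the Rips complex contains a cross-polytope $\mathfrak{B}_{k+1}$ whose vertices are all $2k+2$ points, and $t_d$ as the scale at which one antipodal pair becomes an edge.

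The plan proceeds in four steps, carried out in order.

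\emph{Step 1: $H_k$ forces a cross-polytope.} Use the known fact that a flag complex on $2k+2$ vertices has $\tilde H_k\neq 0$ only when it is exactly the boundary of the $(k+1)$-cross-polytope, i.e.\ the complete graph minus a perfect matching. The argument runs as follows.
\begin{itemize}
\item If some vertex $v$ is adjacent to all others, the flag complex is a cone on $v$ and hence contractible.
\item So every vertex misses at least one neighbour. Pick a vertex $v$ and a non-neighbour $w$. Use the Mayer--Vietoris sequence for $\mathrm{lk}(v)$, $\mathrm{st}(v)$ and $K\setminus v$, together with induction on $k$: a flag complex on at most $2k$ vertices has $\tilde H_{k-1}=0$ unless it is the cross-polytope on exactly $2k$ vertices.
\item Conclude that nonzero $\tilde H_k$ requires every vertex to have exactly one non-neighbour. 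The complement of the 1-skeleton is then a perfect matching, and $H_k\cong\mathbb F$.
\end{itemize}
This combinatorial lemma is the main obstacle. I would attempt it by induction using the link of a vertex, since links in flag complexes are flag, and by bounding vertex counts: the link of $v$ lies on at most $2k$ vertices.

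\emph{Step 2: locate the interval where the cross-polytope appears.} Along the filtration, edges only get added. The complex equals ``complete minus a perfect matching $M$'' exactly when $\delta$ is at least every non-matched distance and below every matched distance. Over all matchings $M$, this describes the interval $[t_b,t_d)$. I also need to argue that at most one matching $M$ realizes it: two distinct such intervals cannot occur, because the edge sets only grow along the filtration while the edge count stays fixed at $\binom{2k+2}{2}-(k+1)$. So the set of $\delta$ with $H_k\neq0$ is a single half-open interval, which is empty iff $t_b\ge t_d$. Here $t_b$ and $t_d$ should be taken relative to that optimal matching, consistent with the cited definition.

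\emph{Step 3: structure maps are isomorphisms.} Within $[t_b,t_d)$ the simplicial complex is constant, so the structure maps are identities. Hence the module is the interval module $\mathbb I_{[t_b,t_d)}$, and its diagram is the single point $(t_b,t_d)$.

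\emph{Step 4: the semimetric case.} The triangle inequality is never used in Steps 1--3, so the same argument covers semimetrics, as the paper remarks.
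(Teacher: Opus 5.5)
Your plan has the right architecture and matches the cited argument. Step 1 is \cite[Proposition 4.3]{curvaturesets}, which the paper restates in the appendix and uses in \Cref{lem:structuremaps}. The theorem then follows by locating the scales at which $\mathcal{R}_\delta(d_X)$ is the cross-polytope; the paper gives no proof beyond the citation. Your uniqueness argument (monotone edge sets of fixed cardinality) and Steps 3--4 are fine.

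\textbf{Main gap: Step 2 and the definitions.} The statement concerns the specific functions $t_b,t_d$ of \cite[Definition 4.1]{curvaturesets}. You replace them with a guessed matching-based definition and only assert that it is ``consistent with the cited definition''. As you state it, $t_d$ is not even well defined when several matchings minimize $b(M):=\max_{\{x,y\}\notin M}d_X(x,y)$. Write $e(M):=\min_{\{x,y\}\in M}d_X(x,y)$. You never show that the unique matching $M^*$ with $b(M^*)<e(M^*)$ is the $b$-minimizer, and your sentence ``over all matchings this describes $[t_b,t_d)$'' needs exactly that. It is a one-line fix: if $M_0\neq M^*$, some pair in $M^*\setminus M_0$ has distance at least $e(M^*)>b(M^*)$ and at most $b(M_0)$.

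A cleaner fix is to work with the pointwise quantities of the cited paper (as I recall them):
\begin{itemize}
\item $t_d(X)=\min_x\max_{x'}d_X(x,x')$, the first scale at which some vertex is a cone point;
\item $t_b(X)=\max_x\min_{x'}\max_{x''\neq x'}d_X(x,x'')$, the largest ``second-largest distance'' from a point.
\end{itemize}
With these, ``every vertex of $\mathcal{R}_\delta(d_X)$ has exactly one non-neighbour'' is literally the condition $t_b(X)\le\delta<t_d(X)$. Throughout that range, each $x$'s non-neighbour is its unique farthest point, so the complex is constant. No enumeration of matchings or uniqueness argument is needed, and in the nonempty case these values coincide with your $b(M^*)$ and $e(M^*)$.

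\textbf{Second gap: the induction in Step 1.} The Mayer--Vietoris step gives $\tilde H_k(K)\hookrightarrow\tilde H_{k-1}(\mathrm{lk}\,v)$ only if you also know $\tilde H_k(K\setminus v)=0$. But $K\setminus v$ is a flag complex on $2k+1$ vertices, in degree $k$. Your stated hypothesis, about $\tilde H_{k-1}$ on at most $2k$ vertices, does not cover it.

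State the induction as follows: for every $j$, a flag complex on $m<2j+2$ vertices has $\tilde H_j=0$, and one on $m=2j+2$ vertices has $\tilde H_j\neq 0$ only if it is the cross-polytope $\mathfrak{B}_{j+1}$. Prove the vanishing part by an inner induction on $m$, using the same cone/Mayer--Vietoris argument. There the link has at most $m-2\le 2j-1$ vertices, so the outer hypothesis applies to it.

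Finally, keep $k\ge 1$ or use reduced homology throughout. For $k=0$, the unreduced $H_0$ of a two-point space carries an extra essential class, so the diagram would not be a single point.
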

        In fact, \cite{curvaturesets} provides something stronger:
        \begin{proposition}[{\cite[Proposition 4.3]{curvaturesets}}]
            Let $(X, d_X)$ be a metric space with $n$ points. Suppose that $n$ is even and $k \in \mathbb{Z}_+$ is such that $n = 2k+2$. 

            If there exists a $r \in \mathbb{R}_+$ such that $\dim H_k(\mathcal{R}_r(d_X)) = 1$, then $\mathcal{R}_r(d_X)$ must be isomorphic, as a simplicial complex, to the cross-polytope $\mathfrak{B}_{k+1}$ (see \Cref{fig:crosspolytopes}).
        \end{proposition}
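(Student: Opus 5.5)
The plan is to forget the metric altogether. $\mathcal{R}_r(d_X)$ is the flag (clique) complex of the graph $G_r$ on the $n=2k+2$ points, with edges the pairs at distance $\leq r$; this description holds equally for semimetrics. I will then prove a purely combinatorial statement about flag complexes by induction on $k$. Throughout I use reduced homology $\tilde H$. This is the convention under which the $k=0$ case is meaningful: the $0$-dimensional cross-polytope $\mathfrak{B}_1$ is two disjoint points.

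The statement to prove, for all $k \geq 0$, has two parts.
\begin{enumerate}
\item[(a)] If $K$ is a flag complex on $m<2k+2$ vertices, then $\tilde H_k(K)=0$.
\item[(b)] If $K$ is a flag complex on exactly $2k+2$ vertices with $\tilde H_k(K)\neq 0$, then $K\cong \mathfrak{B}_{k+1}$.
\end{enumerate}
The base case $k=0$ is immediate: a complex with at most one vertex has $\tilde H_0=0$, and two vertices give $\tilde H_0\neq 0$ exactly when they are not joined by an edge, i.e.\ $K=\mathfrak{B}_1$.

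The key tool for the inductive step is a Mayer--Vietoris argument at a vertex $v$. Write
\[K=(K\setminus v)\cup \mathrm{st}(v), \qquad (K\setminus v)\cap\mathrm{st}(v)=\mathrm{lk}(v).\]
Because $K$ is flag, $\mathrm{lk}(v)$ is the induced flag complex on the neighbourhood $N(v)$, and $K\setminus v$ is the induced flag complex on the remaining vertices. If $v$ is adjacent to every other vertex, then $K$ is a cone and is contractible. Otherwise, since $\mathrm{st}(v)$ is contractible, the reduced Mayer--Vietoris sequence gives an exact piece
\[\tilde H_k(K\setminus v)\to \tilde H_k(K)\to \tilde H_{k-1}(\mathrm{lk}(v)).\]

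For (a), I induct on $m$ inside the fixed $k$. The complex $K\setminus v$ has $m-1$ vertices, so $\tilde H_k(K\setminus v)=0$ by induction on $m$. Since $v$ has a non-neighbour, $\mathrm{lk}(v)$ has at most $m-2<2k$ vertices, so $\tilde H_{k-1}(\mathrm{lk}(v))=0$ by (a) at level $k-1$. Exactness then gives $\tilde H_k(K)=0$.

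For (b), suppose $\tilde H_k(K)\neq 0$ with $|K_0|=2k+2$. Then $K$ is not a cone, so every vertex $v$ has a non-neighbour. The complex $K\setminus v$ has $2k+1$ vertices, so $\tilde H_k(K\setminus v)=0$ by (a). Exactness then forces $\tilde H_{k-1}(\mathrm{lk}(v))\neq 0$. Since $\mathrm{lk}(v)$ is flag on at most $2k$ vertices, (a) at level $k-1$ forces it to have exactly $2k$ vertices. Hence every vertex has exactly one non-neighbour, so $G_r$ is the complete graph minus a perfect matching. The flag complex of this graph is by definition the boundary of the cross-polytope, i.e.\ $\mathfrak{B}_{k+1}$. (Part (b) at level $k-1$ also shows $\mathrm{lk}(v)\cong\mathfrak{B}_k$, but this is not needed.)

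Applying (b) with $K=\mathcal{R}_r(d_X)$ proves the proposition. The hypothesis $\dim H_k=1$ is only used through $H_k\neq 0$; for $k\geq 1$ reduced and unreduced $H_k$ agree. The main point needing care is the bookkeeping of the simultaneous induction: (b) at level $k$ uses (a) at levels $k$ and $k-1$, and (a) at level $k$ uses (a) at level $k-1$. I also need to check that the Mayer--Vietoris sequence in reduced homology is valid when $\mathrm{lk}(v)$ is nonempty, which holds here since in the non-cone case with $\tilde H_k(K)\neq 0$ the link cannot be empty.
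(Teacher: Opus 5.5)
Your argument is correct, but it is not what the paper does: the paper gives no proof of this statement. It imports the result from G\'omez--M\'emoli and only asserts, in the appendix, that it remains valid for semimetrics.

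You instead give a self-contained combinatorial proof. You forget the metric, view $\mathcal{R}_r(d_X)$ as the clique complex of the threshold graph $G_r$, and run the standard deletion/link Mayer--Vietoris induction. Its real content is the vertex bound (a): a flag complex with $\tilde{H}_k\neq 0$ has at least $2k+2$ vertices. Part (b) then follows because injectivity of $\tilde{H}_k(K)\to\tilde{H}_{k-1}(\mathrm{lk}(v))$ forces every link to have exactly $2k$ vertices, i.e.\ the non-edges form a perfect matching.

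Your route buys more than self-containedness. Since only $G_r$ is used, it proves the result for any symmetric function vanishing on the diagonal. That is exactly what \Cref{lem:structuremaps} needs, since there $d_X(I)$ is an infimum over time and typically violates the triangle inequality. So your proof supplies the justification the paper omits. It also shows that $\tilde{H}_k\neq 0$ already forces $\dim H_k=1$.

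Two small points. First, in (a) the link can be empty (an isolated $v$), and your closing caveat only covers (b). This is harmless for $k\geq 1$: use the convention $\tilde{H}_{-1}(\emptyset)=\mathbb{F}$, or note directly that then $\tilde{H}_k(K)\cong\tilde{H}_k(K\setminus v)$. Second, for $k=0$ your reduced reading ($\mathfrak{B}_1=S^0$) is not the paper's. With unreduced $H_0$, $\dim H_0=1$ means the two points are joined, and the paper's figure does draw $\mathfrak{B}_1$ as a segment. For $k\geq 1$ the two readings agree, so nothing of substance depends on this.
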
 
        \input{cross_polytopes}
\end{appendices}
\end{document}

%% file: preamble.tex
\def\bbv{\mathbb{V}}
\def\bbw{\mathbb{W}}
\def\bdyn{\mathbf{Dyn}}
\def\bvec{\mathbf{Vec}}
\def\bsimp{\mathbf{Simp}}

\DeclareMathOperator{\supp}{supp}

\newcommand{\defeq}{\vcentcolon=}

\DeclareRobustCommand\longtwoheadrightarrow
     {\relbar\joinrel\twoheadrightarrow}
\DeclareRobustCommand\longtwoheadleftarrow
     {\twoheadleftarrow\joinrel\relbar}

\usepackage{tikz}
\usepackage{tikz-3dplot}
\usetikzlibrary{calc}

%% file: four_point_spaces.tex
\begin{figure}[htbp]
    \centering
    \begin{subfigure}[b]{0.47\textwidth}
        \centering
        \begin{tikzpicture}[scale=3.2]
        \coordinate (A) at (-0.5,-0.5);
        \coordinate (B) at ( 0.5,-0.5);
        \coordinate (C) at ( 0.5, 0.5);
        \coordinate (D) at (-0.5, 0.5);
        \draw[orange, dashed, line width=1.0pt] (A) -- (B) node[midway,below,yshift=-1pt,orange] {$1$};
        \draw[orange, dashed, line width=1.0pt] (B) -- (C) node[midway,right,xshift=2pt,orange] {$1$};
        \draw[orange, dashed, line width=1.0pt] (C) -- (D) node[midway,above,yshift=1pt,orange] {$1$};
        \draw[orange, dashed, line width=1.0pt] (D) -- (A) node[midway,left,xshift=-2pt,orange] {$1$};
        \fill (A) circle (1pt);
        \fill (B) circle (1pt);
        \fill (C) circle (1pt);
        \fill (D) circle (1pt);
        \end{tikzpicture}
        \caption{A space $(X, d_X)$ where $H_1(\mathcal{R}_\delta(d_X))$ for $\delta \in [1, \sqrt{2})$.}
        \label{fig:fourpointsquare}
    \end{subfigure}
    \hfill
    \begin{subfigure}[b]{0.47\textwidth}
        \centering
        \begin{tikzpicture}[scale=3.2]
        \usetikzlibrary{calc}
        \pgfmathsetmacro{\hmaj}{sqrt(3)/2}
        \pgfmathsetmacro{\hmin}{1/2}
        \coordinate (R) at (\hmaj,0);
        \coordinate (T) at (0,\hmin);
        \coordinate (L) at (-\hmaj,0);
        \coordinate (B) at (0,-\hmin);
        \draw[orange, dashed, line width=1.0pt] (T) -- (R) node[midway, right, xshift=2pt, orange] {$1$};
        \draw[orange, dashed, line width=1.0pt] (R) -- (B) node[midway, below, yshift=-1pt, orange] {$1$};
        \draw[orange, dashed, line width=1.0pt] (B) -- (L) node[midway, left, xshift=-2pt, orange] {$1$};
        \draw[orange, dashed, line width=1.0pt] (L) -- (T) node[midway, above, yshift=1pt, orange] {$1$};
        \draw[orange, dashed, line width=1.0pt] (L) -- (R);
        \draw[orange, dashed, line width=1.0pt] (B) -- (T);
        \fill (R) circle (1pt);
        \fill (T) circle (1pt);
        \fill (L) circle (1pt);
        \fill (B) circle (1pt);
        \node[orange, font=\small, fill=white, inner sep=1pt] at ($ (0,0)!0.5!(R) $) [yshift=2pt] {$\tfrac{\sqrt{3}}{2}$};
        \node[orange, font=\small, fill=white, inner sep=1pt] at ($ (0,0)!0.5!(T) $) [xshift=3pt] {$\tfrac{1}{2}$};
        \end{tikzpicture}
        \caption{A space $(Y, d_Y)$ where $H_1(\mathcal{R}_\delta(d_Y))$ is trivial for all $\delta \geq 0$.}
        \label{fig:fourpointrhombus}
    \end{subfigure}
    \caption{Examples of $4$-point metric spaces.}
    \label{fig:fourpointmetricspaces}
\end{figure}

%% file: cross_polytopes.tex
\begin{figure}[htbp]
  \centering
  \begin{subfigure}[b]{0.31\textwidth}
    \centering
    \begin{tikzpicture}[scale=2.2]
      \coordinate (u1) at (-0.75,0);
      \coordinate (u2) at ( 0.75,0);
      \draw[line width=0.9pt] (u1) -- (u2);
      \foreach \p in {u1,u2}
        \fill (\p) circle (1.2pt);
    \end{tikzpicture}
    \caption{The cross-polytope $\mathfrak{B}_1$}
    \label{fig:crosspoly1d}
  \end{subfigure}
  \hfill
  \begin{subfigure}[b]{0.31\textwidth}
    \centering
    \begin{tikzpicture}[scale=2.15]
      \coordinate (v1) at ( 1, 0);
      \coordinate (v2) at ( 0,-1);
      \coordinate (v3) at (-1, 0);
      \coordinate (v4) at ( 0, 1);
      \draw[line width=0.9pt] (v1) -- (v2) -- (v3) -- (v4) -- cycle;
      \foreach \p in {v1,v2,v3,v4}
        \fill (\p) circle (1pt);
    \end{tikzpicture}
    \caption{The cross-polytope $\mathfrak{B}_2$}
    \label{fig:crosspoly2d}
  \end{subfigure}
  \hfill
  \begin{subfigure}[b]{0.31\textwidth}
    \centering
    \tdplotsetmaincoords{80}{25}
    \begin{tikzpicture}[scale=2.2,tdplot_main_coords]
      \coordinate (P1) at ( 1, 0, 0);
      \coordinate (P2) at (-1, 0, 0);
      \coordinate (P3) at ( 0, 1, 0);
      \coordinate (P4) at ( 0,-1, 0);
      \coordinate (P5) at ( 0, 0, 1);
      \coordinate (P6) at ( 0, 0,-1);

      \draw[line width=0.9pt] (P1) -- (P3) -- (P2) -- (P4) -- (P1);
      \draw[line width=0.9pt] (P1) -- (P5) -- (P2);
      \draw[line width=0.9pt] (P1) -- (P6) -- (P2);
      \draw[line width=0.9pt] (P3) -- (P5) -- (P4);
      \draw[line width=0.9pt] (P3) -- (P6) -- (P4);

      \foreach \p in {P1,P2,P3,P4,P5,P6}
        \fill (\p) circle (1pt);
    \end{tikzpicture}
    \caption{The cross-polytope $\mathfrak{B}_3$}
    \label{fig:crosspoly3d}
  \end{subfigure}
  \caption{Some cross-polytopes}
  \label{fig:crosspolytopes}
\end{figure}
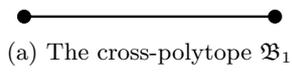
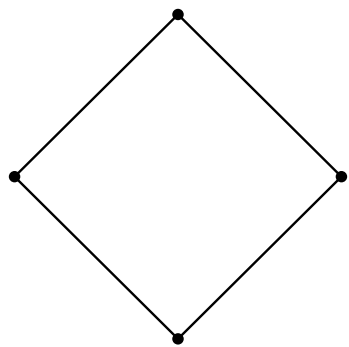
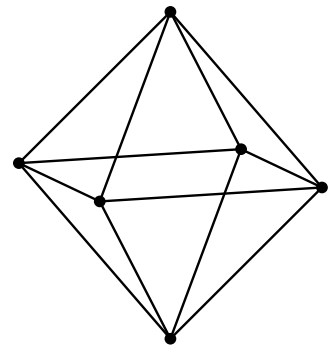